\documentclass[12pt]{amsart}
\pagestyle{plain}
\usepackage{graphicx}
\usepackage[active]{srcltx}
\usepackage{amsthm,mathrsfs}
\usepackage{a4wide}
\usepackage{amsfonts}
\usepackage{amsmath}
\usepackage{amssymb}
\newtheorem{lemma}{Lemma}
\newtheorem{theorem}{Theorem}
\newtheorem{corollary}{Corollary}
\newtheorem{remark}{Remark}
\newtheorem{example}{Example}
\newtheorem{proposition}{Proposition}

\theoremstyle{definition}
\newtheorem{definition}[theorem]{Definition}
\usepackage{float}
\usepackage{url}
\usepackage{enumitem}
\usepackage{epstopdf}

\usepackage{ mathrsfs }

\usepackage{calligra}
\usepackage{dsfont}

\newcommand {\PP} {\mathbb{P}}
\newcommand {\ZZ} {\mathbb{Z}}

\newcommand {\NN} {\mathbb{N}}

\newcommand {\FF} {\mathbb{F}}

\makeatletter\def\blfootnote{\xdef\@thefnmark{}\@footnotetext}\makeatother
\allowdisplaybreaks
\parindent0mm

%opening
\title{\bf Discrepancy bounds for normal numbers generated by necklaces in arbitrary base}

\author{Roswitha Hofer and Gerhard Larcher} 
\address{Institute of Financial Mathematics and Applied Number Theory, Johannes Kepler University Linz, Altenbergerstra{\ss}e 69, 4040 Linz, AUSTRIA}
\email{roswitha.hofer@jku.at, gerhard.larcher@jku.at}

\thanks{MSC2020: 11K16, 11K38. \\Keywords: Normal numbers, discrepancy. \\The authors are supported by the Austrian Science Fund (FWF): Projects F5505-N26 and F5507-N26, which are part of the Special Research Program ``Quasi-Monte Carlo Methods: Theory and Applications''}

\begin{document}

\begin{abstract}
Mordechay B. Levin in \cite{Lev} has constructed a number $\lambda$ which is normal in base 2, and such that the sequence $(\left\{2^n \lambda\right\})_{n=0,1,2,\ldots}$ has very small discrepancy $D_N$. Indeed we have $N\cdot D_N = \mathcal{O} \left(\left(\log N\right)^2\right)$. This construction technique of Levin was generalized in  \cite{Bec}, where the authors generated normal numbers via perfect nested necklaces, and where they showed that for these normal numbers the same upper discrepancy estimate holds as for the special example of Levin. In this paper now we derive an upper discrepancy bound for so-called semi-perfect nested necklaces and show that for the Levin's normal number in arbitrary prime base $p$ this upper bound for the discrepancy is best possible, i.e., $N\cdot D_N \geq c\left(\log N\right)^2$ with $c>0$ for infinitely many $N$. This result generalizes \cite{Lar} where we ensured for the special example of Levin for the base $p=2$, that $N\cdot D_N =O( \left(\log N\right)^2)$ is best possible in $N$. So far it is known by a celebrated result of Schmidt \cite{Schm} that for any sequence in $[0,1)$, $N\cdot D_N\geq c \log N$ with $c>0$ for infinitely many $N$. So there is a gap of a $\log N$ factor in the question, what is the best order for the discrepancy in $N$ that can be achieved for a normal number. Our result for Levin's normal number in any prime base on the one hand might support the guess that $O( \left(\log N\right)^2)$ is the best order in $N$ that can be achieved by a normal number, while generalizing the class of known normal numbers by introducing e.g. semi-perfect necklaces on the other hand might help for the search of normal numbers that satisfy smaller discrepancy bounds in $N$ than $N\cdot D_N=O( \left(\log N\right)^2)$. 
\end{abstract}

\date{}
\maketitle

\section{Introduction}
Let $b \ge 2$ be an integer. 
A real number $\alpha \in [0,1)$ is called \emph{normal in base $b$} if in its base $b$ representation $\alpha = 0. \alpha_1 \alpha_2 \ldots$ the following holds: For every positive integer $k$ and any block $a_1 a_2 \ldots a_k \in \left\{0,1, \dots , b-1\right\}^k$ of length $k$ we have
$$
\lim_{N \rightarrow \infty} \frac{1}{N} \# \left\{0 \leq n < N \left|\right. \alpha_{n+1} \alpha_{n+2} \ldots \alpha_{n+k} = a_1 a_2 \ldots a_k\right\} = \frac{1}{b^k}.
$$
It is an easy exercise to show that $\alpha$ is normal in base $b$ iff the sequence $(\{b^n \alpha\})_{n=0,1,\ldots}$ is uniformly distributed in $[0,1)$. That means: for any $a,c$ with $0 \leq a < c \leq 1$ we have
$$
\lim_{N \rightarrow \infty} \frac{1}{N} \# \left\{0 \leq n < N \left|\right. a \leq \left\{b^n \alpha\right\} < c \right\} = c-a.
$$
It was pointed out, for example in \cite{Lar}, that the discrepancy $D_N$ of the sequence $(\{b^n \alpha\})_{n=0,1,\ldots}$ therefore is a perfect measure for the ``quality of the normality of $\alpha $ in base $b$'' (the arguments used in \cite{Lar} for base $b=2$ hold for arbitrary base $b\geq 2$).

Here for an arbitrary sequence $(x_n)_{n=0,1,\ldots}$ in $[0,1)$ the discrepancy $D_N$ is defined by
$$
D_N:=\sup_{0 \leq a < c \leq 1} \left| \left. \frac{1}{N} \# \Big\{0 \leq n < N \right| a \leq x_n < c\Big\} - (c-a)\right|.
$$
We will say: ``$D_N$ is the discrepancy of the normal number $\alpha $ in base $b$''. 
The star-discrepancy $D_N^*$ is obtained by setting $a=0$. Obviously, $D_N^*\leq D_N$ and $D_N\leq 2D^*_N$.

It was shown by W.M. Schmidt \cite{Schm}, and it is a well-known fact that there is a positive constant $c$, such that for every sequence $(x_n)_{n = 0,1, \ldots}$ in $[0,1)$ we have
$$
D_N\geq c \cdot \frac{\log N}{N}
$$

for infinitely many $N$. So also the discrepancy of any normal number $\alpha$ in base $b$ is at least of order $c \cdot \frac{\log N}{N}$. 

By an ingenious construction Mordechay B. Levin in \cite{Lev} provided a number $\lambda$ that is normal in base $2$ with discrepancy $D_N \leq c\cdot \frac{(\log N)^2}{N}$ (with an absolute constant $c>0$). Until then it was only known that for almost all $\alpha$ we have $D_N = \mathcal{O} \left(\left(\frac{\log\log N}{N}\right)^{\frac{1}{2}}\right)$, see \cite{Gal}, and Korobov has given an explicit example of $\alpha$ with $D_N = \mathcal{O} \left(\left(\frac{1}{N}\right)^{\frac{1}{2}}\right)$, see \cite{Koro}. The most prominent normal number, the Champernowne number $\alpha$, is of rather bad quality. We have
$$
D_N \geq c\cdot \frac{1}{\log N}
$$
with $c>0$ for infinitely many $N$. See for example \cite{Schiff}.\\

Nevertheless, there is still a gap of one $\log N$-factor between the best known example of Levin and the currently best-known lower bound for $D_N$. So the  main and certainly challenging question is, if either the upper or the lower bound (or both bounds) for the discrepancy of normal numbers can be improved. The first idea in an attempt to improve the upper bound was to try to improve the discrepancy estimate given by Levin for his normal number $\lambda$. Indeed the discrepancy estimate for Levin's example cannot be improved. This was shown in \cite{Lar}. Namely, we have $N\cdot D_N \geq c \cdot \left(\log N\right)^2$ for infinitely many $N$ (with a fixed positive constant c) for Levin's example.

In \cite{Bec} it was shown, that Levin's normal number in base $2$ can be seen as one special example of a more general concept. This is the concept of $b$-ary $(k,l)$-nested perfect necklaces, together with a technique to concatenate such necklaces to build normal numbers with small discrepancy of order at most $\frac{\left(\log N\right)^2}{N}$.

It is one goal of this paper to show that the lower bound $D_N\geq c \left(\frac{\left(\log N\right)^2}{N} \right)$ with $c>0$ for infinitely many $N$, which was shown in \cite{Lar} for the special example $\lambda$ of Levin in the special base $2$, also holds for many further explicitly known normal numbers generated via perfect necklaces in the manner of Becher and Carton in arbitrary prime base. In Theorem \ref{thm:2} we ensure such a lower bound for Levin's normal number in any prime base $p$. Each additional example with known exact discrepancy order $(\log N)^2/N$ in $N$ supports the hypothesis that such a rate in $N$ is best possible for the discrepancy of any normal number. 

Additionally, we will show (Theorem \ref{thm:1}) that the concept of $b$-ary $(k,l)$-nested perfect necklaces can be weakened to $b$-ary $(k,l)$-nested semi-perfect necklaces, and this concept still can provide normal numbers with a discrepancy of order $\frac{\left(\log N\right)^2}{N}$. An enlargement of the set of normal numbers with good discrepancy allows more space for the search for a normal number with smaller discrepancy than $O(\frac{\left(\log N\right)^2}{N})$. \\ 

The rest of the paper is organized as follows:

In Sections \ref{sec:2} and \ref{sec:3} we reintroduce the concept of $(k,l)$-nested (semi-)perfect necklaces and the corresponding techniques to generate ``low-discrepancy normal numbers'', and we state and prove an upper discrepancy estimate (see Theorem \ref{thm:2} and Example \ref{examp:1}). 
In Section \ref{sec:4} we formulate the lower bound for the discrepancy of normal numbers generated via Levin's construction method in prime base (cf. Theorem \ref{thm:2}) and we give auxiliary results which are needed for the proof. Finally, in Section \ref{sec:5} we give the proof of Theorem \ref{thm:2} and discuss the limits or difficulties of the method of proof for the case of more general normal numbers built by necklaces (see Remark \ref{rem:3}).

\section{Necklaces and normal numbers}\label{sec:2}

In this section we reintroduce and slightly generalize so-called nested perfect necklaces which are used as building blocks of normal numbers in the sense of Becher and Carton (see \cite{Bec}). 

 Let $b\in\NN\setminus\{1\}$ and $A$ be an alphabet with $b$ elements. Here $A=\{0,1,\ldots,b-1\}$. Let $l\in\NN$. A $b$-ary word of length $l$ is a finite sequence of $l$ elements of $A$. E.g. $01110000101$ is a word of length $11$ with $b=2$. Such a word is often called a \textit{binary} word. A necklace or circular word is the equivalence class of a word under rotations. So, e.g., $1100$, $1001$, $0011$, and $0110$ are equivalent.

\begin{definition}
A $b$-ary necklace $w$ is \emph{$(k,l)$-perfect} if its length is $lb^k$ and each word of length $k$ occurs exactly $l$ times at positions which are different modulo $l$. 

A $b$-ary necklace $w$ we call \emph{$(k,l)$-semi perfect} if its length is $lb^k$ and each word of length $k$ occurs exactly $l$ times. 

A $b$-ary necklace $w$ is \emph{$(k,l)$-nested perfect} if for each integer $j\in\{1,\ldots,k\}$ each block of $w$ of length $lb^j$ starting at a position congruent to 1 modulo $lb^j$ is a $(j,l)$-perfect necklace. 

We call a necklace \emph{$(k,l)$-nested semi-perfect} if for each integer $j\in\{1,\ldots,k\}$ each block of $w$ of length $lb^j$ starting at a position congruent to 1 modulo $lb^j$ is a $(j,l)$-semi perfect necklace. 
 
\end{definition}

The following theorem gives a discrepancy bound for the sequences $(\{b^n\alpha\})_{n\geq 0}$, where the $b$-ary expansion of $\alpha$ is obtained by the concatenation of $b$-ary nested semi-perfect necklaces. 
\begin{theorem}\label{thm:1}
Let $b\in\NN\setminus\{1\}$, $g,f:\NN\to\NN$, and $\alpha\in[0,1)$ such that its base $b$ representation is obtained by the concatenation of $b$-ary $(f(j),g(j))$-nested semi-perfect necklaces, with $j=1,2,3,\ldots$. 

Then the star discrepancy $D_N^*$ of $(\{b^n\alpha\})_{n=0,1,\ldots}$ satisfies 
$$ND^*_N\leq \sum_{j=1}^{m-1}f(j)+\sum_{j=1}^{m}g(j)+(b-1)\frac{f(m)^2}{2}+(b-1)f(m)g(m)$$ for all $N\in\NN$, where $m\in\NN$ is such that $\sum_{j=1}^{m-1}g(j)b^{f(j)}\leq N< \sum_{j=1}^{m}g(j)b^{f(j)}$.
\end{theorem}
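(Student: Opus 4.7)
The plan is to bound $ND_N^*$ by fixing an arbitrary $c\in[0,1]$ with base-$b$ expansion $c=0.c_1c_2\ldots$ and estimating $|\#\{0\le n<N:x_n<c\}-Nc|$ via the standard $b$-adic decomposition $[0,c)=\bigsqcup_{k\ge1}J_k$, where $J_k$ is the union of $c_k$ adjacent $b$-adic intervals of length $b^{-k}$ lying immediately to the left of $0.c_1\cdots c_k$. Since $x_n=\{b^n\alpha\}$ lies in the level-$k$ interval labelled by a length-$k$ word $w$ if and only if $\alpha_{n+1}\cdots\alpha_{n+k}=w$, the problem reduces to estimating the block-frequency counts
\[
F_{k,w}(N):=\#\{0\le n<N:\alpha_{n+1}\cdots\alpha_{n+k}=w\}.
\]
Writing $N=S_{m-1}+r$ with $S_{m-1}:=\sum_{j<m}g(j)b^{f(j)}$ and $0\le r<g(m)b^{f(m)}=:N_m$, I would split $F_{k,w}(N)=\sum_{j=1}^{m-1}F_{k,w}^{(j)}+F_{k,w}^{(m,r)}$ according to which necklace contains the starting position, letting the $k$-window spill into the following necklace when required.

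For a complete $j$-th necklace with $k\le f(j)$, the $(f(j),g(j))$-semi-perfect property forces each length-$k$ word to appear exactly $N_j/b^k$ times in the circular view of necklace $j$ (since every length-$k$ word is the prefix of $b^{f(j)-k}$ length-$f(j)$ words, each occurring $g(j)$ times). Passing from circular to linear count inside the concatenation produces a boundary error $\sigma_j(w)-\tau_j(w)$, where $\sigma_j(w)$ counts occurrences at the last $k-1$ starting positions of necklace $j$ read into necklace $j+1$, and $\tau_j(w)$ is the hypothetical wrap-around count within necklace $j$ itself; both lie in $[0,k-1]$ and, crucially, the partial-sum identities $\sum_{w}\sigma_j(w)=\sum_{w}\tau_j(w)=k-1$ hold when summing over all length-$k$ words. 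For the partial $m$-th necklace I would decompose the integer $r$ in the mixed-radix form $r=a_{-1}+\sum_{i=0}^{f(m)-1}a_i\,g(m)b^i$ with $a_{-1}<g(m)$ and $a_i<b$, which splits the length-$r$ prefix into complete $(i,g(m))$-semi-perfect sub-blocks sitting at admissible starting positions modulo $g(m)b^i$, plus a residual of length less than $g(m)$; the same circular-to-linear adjustment then applies inside each sub-block.

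To finish, I truncate the level sum at $K=f(m)$: levels $k>f(m)$ only see points in $[0.c_1\cdots c_{f(m)},c)$, a sub-interval of a $b$-adic interval of length $b^{-f(m)}$, so the tail contributes at most $F_{f(m),c_1\cdots c_{f(m)}}(N)+N/b^{f(m)}$; the inequality $N/b^{f(m)}\le\sum_{j=1}^{m}g(j)$, which follows directly from $N<\sum_{j\le m}g(j)b^{f(j)}$ (in the monotone regime of $f$), absorbs this into the $\sum_{j=1}^{m}g(j)$ summand of the theorem. For $k\le f(m)$, summing deviations over the $c_k\le b-1$ words $w_{k,i}$ and using the key partial-sum estimate
\[
\Bigl|\sum_{i=0}^{c_k-1}\bigl(\sigma_j(w_{k,i})-\tau_j(w_{k,i})\bigr)\Bigr|\le k-1
\]
(obtained by bounding $\sum_{i<c_k}\sigma_j$ and $\sum_{i<c_k}\tau_j$ separately by their totals $k-1$) per complete necklace, together with level-by-level accounting of the mixed-radix sub-block contributions from necklace $m$, should deliver the remaining $\sum_{j<m}f(j)$, $(b-1)f(m)^2/2$, and $(b-1)f(m)g(m)$ terms. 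The main obstacle is to control the accumulation of boundary errors across the $m-1$ complete necklaces so that at each fixed level $k$ the summed error fits into $\sum_{j<m}f(j)$ rather than into the naive bound $(m-1)(k-1)$; this should come from the cancellation inherent in the $(\sigma_j-\tau_j)$ decomposition combined with a careful sub-block book-keeping for the prefix of necklace $m$, with the incomplete $m$-th necklace being the unique source of the quadratic $f(m)^2$ term.
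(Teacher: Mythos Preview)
Your approach has a genuine gap, and it is precisely the ``main obstacle'' you identify at the end. For a complete necklace $j$ your $b$-adic decomposition of $[0,c)$ produces, at every level $k\le f(j)$, a clasp/boundary correction $\sum_{i<c_k}(\sigma_j(w_{k,i})-\tau_j(w_{k,i}))$ whose modulus is bounded by $k-1$ but for which there is no mechanism forcing cancellation \emph{across levels}: the overlaps of lengths $1,2,\ldots,f(j)-1$ at the seam between necklace $j$ and necklace $j{+}1$ are genuinely independent data, and the $(\sigma_j-\tau_j)$ decomposition only tells you that at a fixed level the contribution is $O(k)$. Summing over $k=1,\dots,f(j)$ therefore yields $\sum_{k\le f(j)}(k-1)\asymp f(j)^2/2$ per complete necklace, hence $\sum_{j<m}f(j)^2/2$ in total, which is strictly larger than the claimed $\sum_{j<m}f(j)$. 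No amount of ``careful book-keeping'' rescues this: the loss is intrinsic to decomposing $[0,c)$ level by level while simultaneously counting level by level inside each complete necklace.

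The paper's proof sidesteps this entirely by \emph{not} decomposing $[0,\gamma)$ for the complete necklaces. For necklace $j$ it uses only the coarsest approximation $c:=\lfloor\gamma\,b^{f(j)}\rfloor$, sandwiches $A(\mathcal{P}_j,[0,\gamma))$ between $A(\mathcal{P}_j,[0,c/b^{f(j)}))$ and $A(\mathcal{P}_j,[0,(c+1)/b^{f(j)}))$, and then invokes the $(f(j),g(j))$-semi-perfect property \emph{once}, at the top level $f(j)$ only: each of the $c$ length-$f(j)$ blocks below $c$ occurs exactly $g(j)$ times circularly, so a single clasp correction of size $\le f(j)-1$ suffices, and the sandwich costs one extra elementary interval, i.e.\ $|\mathcal{P}_j|/b^{f(j)}=g(j)$. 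This gives $|\Delta(\mathcal{P}_j,[0,\gamma))|\le f(j)+g(j)-1$ directly. The \emph{nested} property is never used for the complete necklaces; it enters only when handling the partial $m$-th necklace via the base-$b$ expansion of $N''$, which is where the quadratic term $(b-1)f(m)^2/2$ legitimately arises. A secondary issue: your tail estimate $N/b^{f(m)}\le\sum_{j\le m}g(j)$ requires $f(j)\le f(m)$ for all $j\le m$, an unstated monotonicity assumption; the paper's argument needs no such hypothesis because it never truncates at a global level $f(m)$.
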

\begin{proof}
We write $N$ as $N=\sum_{j=1}^{m-1}g(j)b^{f(j)}+N'=:n_{m}+N'$, $N'$ as
$N'=g(m)N''+R$ with $0\leq R< g(m)$, and $N''$ as $N''=N_0+N_1b^1+N_2b^2+\cdots+N_{f(m)-1}b^{f(m)-1}$, i.e., its unique base $b$ representation with $N_i\in\{0,1,\ldots,b-1\}$.

Let $[a,b)\subseteq [0,1)$ and $A(\mathcal{P},[a,b)):=\#\{x_n\in \mathcal{P}:x_n\in[a,b)\}$ be a counting function and $\Delta(\mathcal{P},[a,b)):=A(\mathcal{P},[a,b))-|\mathcal{P}|\cdot(b-a)$ be the so-called local discrepancy function where $\mathcal{P}$ might be a multiset. 

Let $\gamma\in[0,1)$ and $j\in\{1,\ldots,m-1\}$. We estimate $A(\mathcal{P}_j,[0,\gamma))$ for $\mathcal{P}_j=\{\,\{b^n\alpha\}\,:\,n_j\leq n< n_{j+1} \}$. We define $c:=\lfloor\gamma b^{f(j)}\rfloor\in[0,b^{f(j)})\cap \NN_0$. Note that 
$$ A(\mathcal{P}_j,[0,c/b^{f(j))})\leq A(\mathcal{P}_j,[0,\gamma))\leq A(\mathcal{P}_j,[0,(c+1)/b^{f(j)})).$$
For the outer terms we use the necklace property and obtain 
$$\Delta(\mathcal{P}_j,[0,c/b^{f(j)}))=A(\mathcal{P}_j,[0,c/b^{f(j)}))-g(j)c= \pm \rho_j,\,\mbox{ with } 0\leq \rho_j\leq f(j)-1.$$
Note that each $u$ with $0\leq u\leq c-1$ corresponds with a block of length $f(j)$ and each such block occurs exactly $g(j)$ times in a $(f(j),g(j))$-semi-perfect necklace. Note that at most the $f(j)-1$ blocks that are crossing the necklace clasp might be different from the corresponding digit-blocks in $\alpha$. \\

Therefore, it is easy to check that the local discrepancy function satisfies 
$$|\Delta(\mathcal{P}_j,[0,\gamma))|\leq f(j)+g(j)-1.$$
Note that 
\begin{eqnarray*}
\Delta(\mathcal{P}_j,[0,c/b^{f(j)}))-|\mathcal{P}_j|1/b^{f(j)}\leq \Delta(\mathcal{P}_j,[0,\gamma))\leq \Delta(\mathcal{P}_j,[0,(c+1)/b^{f(j)}))+|\mathcal{P}_j|1/b^{f(j)}.
\end{eqnarray*}

Using the base $b$ representation of $N''=N_0+N_1b^1+N_2b^2+\cdots+N_{f(m)-1}b^{f(m)-1}$, we define $B_{l,k}:=kb^l+\cdots+N_{f(m)-1}b^{f(m)-1}$ with $k\in\{0,1,\ldots,N_l\} $ for those $l\in\{0,1,\ldots,f(m)-1\}$ such that $N_l\neq 0$. Furthermore for such $k,l$, we set $\mathcal{B}_{l,k}:=\{\,\{b^n\alpha\}\,:\,n_m+B_{l,k}\leq  n< n_m+B_{l,k+1}\}$ for those $l\in\{0,1,\ldots,f(m)-1\}$ such that $N_l\neq 0$ with $k\in\{0,1,\ldots,N_l-1\}$. 

By the nested property of the necklaces and analog argumentation as above we obtain 
$$|\Delta(\mathcal{B}_{l,k},\gamma)|\leq l+g(m)-1.$$

Using the additive property in the first argument of the local discrepancy we obtain 
\begin{eqnarray*}
ND_N^*&\leq &\sum_{j=1}^{m-1}(f(j)+g(j)-1)+\sum_{l=0}^{f(m)-1}\sum_{k=0}^{N_l-1}(l+g(m)-1)+g(m)-1\\
&\leq &\sum_{j=1}^{m-1}f(j)+\sum_{j=1}^{m}g(j)\\
&&+(b-1)\frac{f(m)(f(m)-1)}{2}+(b-1)f(m)g(m)-(b-1)f(m)-m.
\end{eqnarray*}
\end{proof}
From the proof of Theorem \ref{thm:1} we immediately derive the following corollary, which will be used via Item (1) of Example \ref{examp:1} in the proof of Theorem \ref{thm:2}. 

\begin{corollary}\label{coro:1}
Let $b\in\NN\setminus\{1\}$, $g,f:\NN\to\NN$, and $\alpha\in[0,1)$ such that its base $b$ representation is obtained by the concatenation of $b$-ary $(f(j),g(j))$-nested semi-perfect necklaces, with $j=1,2,3,\ldots$. 
We set $n_m:=\sum_{j=1}^{m-1}g(j)b^{f(j)}$ with $m\geq 2$.
Then the star discrepancy $D_{n_m}^*$ of $(\{b^n\alpha\})_{n=0,1,\ldots,n_m-1}$ satisfies $$n_mD^*_{n_m}\leq \sum_{j=1}^{m-1}f(j)+\sum_{j=1}^{m-1}g(j).$$

\end{corollary}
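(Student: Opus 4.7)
The plan is to specialize the argument of Theorem \ref{thm:1} to the boundary case $N = n_m$. With the decomposition $N = n_m + N'$ used there, we now have $N' = 0$, which forces $N'' = 0$ and $R = 0$. Consequently, the partial $m$-th necklace contributes nothing: the double sum $\sum_{l}\sum_{k}(l + g(m) - 1)$ over the blocks $\mathcal{B}_{l,k}$ and the remainder term both vanish, so the entire $f(m)$- and $g(m)$-dependent piece of the Theorem \ref{thm:1} estimate disappears.

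What remains is the analysis of the $m-1$ complete necklaces. I would partition the index set $\{0,1,\ldots,n_m-1\}$ into the multisets $\mathcal{P}_j = \{\{b^n\alpha\} : n_j \leq n < n_{j+1}\}$ for $j = 1,\ldots,m-1$, each of which corresponds exactly to one full $(f(j),g(j))$-nested semi-perfect necklace in the base-$b$ expansion of $\alpha$, and re-use the bound $|\Delta(\mathcal{P}_j,[0,\gamma))| \leq f(j) + g(j) - 1$ established there (combining the exact count $g(j)c$ coming from the semi-perfect property, the at most $f(j)-1$ error from blocks crossing the clasp, and the at most $g(j)$ points falling in the approximation interval $[c/b^{f(j)},(c+1)/b^{f(j)})$).

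Additivity of the local discrepancy function in its first argument then yields
$$n_m D^*_{n_m} \leq \sum_{j=1}^{m-1}\bigl(f(j) + g(j) - 1\bigr) \leq \sum_{j=1}^{m-1} f(j) + \sum_{j=1}^{m-1} g(j),$$
which is the claim. No genuine obstacle arises here; the corollary is essentially a restatement of the observation that when $N$ lands exactly at the end of a complete necklace, the partial-necklace contributions in Theorem \ref{thm:1} drop out and only the sum over complete necklaces survives.
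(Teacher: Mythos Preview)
Your proposal is correct and follows exactly the approach the paper intends: the paper simply states that the corollary is immediate from the proof of Theorem~\ref{thm:1}, and you have spelled out precisely how---specializing to $N=n_m$ kills $N'$, $N''$, $R$, the double sum over the $\mathcal{B}_{l,k}$, and the trailing $g(m)-1$ term, leaving only $\sum_{j=1}^{m-1}(f(j)+g(j)-1)$.
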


\begin{example}\label{examp:1}
\begin{enumerate}
	\item Let $g(j)=f(j)=b^j$. Then $m\leq \log_b\log_b N+1$ for $N$ large enough and the star discrepancy $D_N^*$ of $(\{b^n\alpha\})_{n=0,1,\ldots}$ satisfies
	$$ND_N^*=O_b(\log^2N).$$
	Furthermore, for $n_m:=\sum_{j=1}^{m-1}b^jb^{b^j}$ where $m\geq 2$ we have
	\begin{equation*}%\label{equ:bound:Nm}
	n_mD^*_{n_m}=O_b(\log N).
	\end{equation*}
	\item Let $g(j)=f(j)=j$. Then $m\leq \log_b N+1$ for $N$ large enough and the star discrepancy $D_N^*$ of $(\{b^n\alpha\})_{n=0,1,\ldots}$ satisfies 
	$$ND_N^*(\{b^n\alpha\})=O_b(\log^2N).$$
	
		%\item Let $g(j)=j^2,\,f(j)=j$ then $m\leq \log_b N+1$ for $N$ large enough and 
	%$$ND_N^*(\{b^n\alpha\})=O_b(\log^3N).$$
	%\item Let $g(j)=b^{2j},\,f(j)=b^{b^j}$ then $m\leq \log_b\log_b N +1$ for $N$ large enough and 
	%$$ND_N^*(\{b^n\alpha\})=O_b(\log^3N).$$
\end{enumerate}
\end{example}

\begin{remark}\label{rem:kl}{\rm
Interesting questions on necklaces include, e.g., which values of $k$ and $l$ are possible such that a $(k,l)$-semi perfect nested $b$-ary necklace can exist. For example, it is not so hard to see, that $$l\geq \frac{k}{2}.$$
From the semi-perfect property we know that the word of length $l b^k$, i.e., 
$$\underbrace{\overbrace{\ldots}^{b^1 l}\overbrace{\ldots}^{b^1 l} \quad \quad\ldots\ldots\ldots\quad \quad \overbrace{\ldots}^{b^1 l}}_{\mbox{$lb^k$ elements in $\{0,1,\ldots,b-1\}$}},$$ must contain a block consisting of $k$ consecutive $0$s.

From the nested property we know that each subword of length $lb^1$ starting at a position congruent to $1$ modulo $lb^1$ must contain $l$ $0$s, $l$ $1$s, $\ldots$, and $l$ $(b-1)$s. Hence the block consisting of $k$ consecutive $0$s, might be part of at most two such neighboring subwords of length $b^1l$. Hence 
$$\overbrace{\ldots\ldots\ldots\ldots\underbrace{00\ldots\ldots0}_{\mbox{at most  $l\, 0$s, }}}^{b^1 l}  \overbrace{\underbrace{00\ldots\ldots0}_{\mbox{ at most $ l\, 0$s}}\ldots\ldots\ldots\ldots}^{b^1 l}.$$
Therefore, $2l\geq k$ or equivalently $l\geq \frac{k}{2}$. 
}
\end{remark}

At this point we also note the following lemma on normal numbers built of necklaces, which will be needed for the proof of Theorem \ref{thm:2}. 

\begin{lemma} \label{lem:2gen}
Let $b\in\NN\setminus\{1\}$ and $\alpha$ be a number in $[0,1)$ whose base $b$ representation is built by the concatenation of $(b^j,b^j)$-semi perfect nested $b$-ary necklaces with $j=1,2,3,\ldots$. 

For every positive integer $m$ we define $n_m:=\sum_{j=1}^{m-1}b^jb^{b^j}$. Then for every $1 \leq t \leq b^m$, every integer $B$ with $0 \leq B < b^{b^{m}-t}$, and for every integer $c$ with $0 \leq c < b^{t}$ with the exception of at most $2b^{m}$ such $c$ we have
$$
\# \underbrace{\left\{(k,n)\,|\,0 \leq k < b^m, B \cdot b^{t} \leq n < B \cdot b^{t} + b^{t} \left |\left\{b^{n_m + {b^m} n+k} \cdot \alpha\right\}\right.\in \left[\frac{c}{b^{t}}, \frac{c+1}{b^{t}}\right)\right\} }_{=:\mathcal{M}_c}= b^m.
$$
\end{lemma}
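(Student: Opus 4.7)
The idea is to reduce the lemma to a digit-block counting problem inside the $m$-th necklace, and then apply the $(b^m,b^m)$-nested semi-perfect property at the scale $j=t$. The condition $\{b^{n_m+b^mn+k}\alpha\}\in[c/b^t,(c+1)/b^t)$ is equivalent to the statement that the block $\alpha_{n_m+b^mn+k+1}\cdots\alpha_{n_m+b^mn+k+t}$ of digits of $\alpha$ equals the $t$-digit base-$b$ expansion of $c$. Hence $\#\mathcal{M}_c$ counts the occurrences of the word coding $c$ at a prescribed collection of starting positions in $\alpha$.

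The key reindexing is that the map $(n,k)\mapsto b^mn+k$ is a bijection from $[Bb^t,(B+1)b^t)\times[0,b^m)$ onto the contiguous integer interval $[Bb^{m+t},(B+1)b^{m+t})$. Thus the relevant starting positions sweep out the internal positions $Bb^{m+t}+1,\ldots,(B+1)b^{m+t}$ of the $m$-th necklace. Since $Bb^{m+t}+1\equiv 1\pmod{b^m\cdot b^t}$ and $1\le t\le b^m$, the nested property applied with $j=t$ guarantees that this length-$b^{m+t}$ block, viewed as a circular word, is $(t,b^m)$-semi perfect; hence cyclically each of the $b^t$ words of length $t$ occurs exactly $b^m$ times.

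The concluding step is to translate the cyclic count into the linear count $\#\mathcal{M}_c$. The two counts agree for the $b^{m+t}-t+1$ starting positions whose length-$t$ blocks lie entirely inside the window. They can differ only at the last $t-1$ starting positions, where the actual block read from $\alpha$ continues into the real successor of the window (the next portion of the $m$-th necklace, or the following necklace when $B=b^{b^m-t}-1$), whereas the cyclic count wraps around to the beginning of the window. Each such boundary position alters the count for at most two distinct length-$t$ words (the actual one and its cyclic substitute), so $\#\mathcal{M}_c\neq b^m$ for at most $2(t-1)<2b^m$ values of $c$. The main delicacy of the argument is precisely this boundary bookkeeping; once cyclic and linear occurrences are correctly reconciled, the nested semi-perfect property at level $j=t$ does the rest.
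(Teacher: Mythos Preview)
Your proof is correct and follows essentially the same approach as the paper's own argument: reduce to counting length-$t$ digit blocks in the sub-necklace of length $b^{m+t}$ starting at position $Bb^{m+t}+1$, invoke the nested semi-perfect property at level $j=t$ to get exactly $b^m$ cyclic occurrences of each word, and then observe that only the $t-1$ blocks at the clasp can differ from the actual blocks read linearly in $\alpha$, giving at most $2(t-1)<2b^m$ exceptional values of $c$. Your write-up is in fact more explicit than the paper's (in particular, you spell out the bijection $(n,k)\mapsto b^mn+k$ onto the contiguous interval $[Bb^{m+t},(B+1)b^{m+t})$ and the role of the congruence $Bb^{m+t}+1\equiv 1\pmod{b^{m+t}}$), but the substance is identical.
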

\begin{proof}
From the definition of a semi-perfect nested necklace, which we notate by $$\eta_1,\,\eta_2,\,\ldots,\,\eta_{b^{m+t}},$$ we know that each block of length $t$ occurs exactly $b^m$ times. Only the $t-1$ blocks at the necklace clasp (at the beginning and at its end), they are $$(\eta_{b^{m+t}-(t-1)},\eta_{b^{m+t}-(t-2)},\ldots, \eta_1),\,\ldots,\,(\eta_{b^{m+t}},\eta_{1},\ldots, \eta_{t-1}), $$ might be different from the corresponding $(t-1)$ blocks in the base $b$ representation of $\alpha$, which we notate by 
\begin{align*}
\ldots,\alpha_{n_m+b^{m+t}B},\alpha_{n_m+b^{m+t}B+1},\ldots\ldots,\alpha_{n_m+b^{m+t}B+b^{m+t}-1},\\
\alpha_{n_m+b^{m+t}(B+1)},\alpha_{n_m+b^{m+t}(B+1)+1},\ldots,\alpha_{n_m+b^{m+t}(B+1)+t-2},\ldots.
\end{align*} 
Thus for at most $2(t-1)< 2b^m$ values of $c$ the set $\mathcal{M}_c$ might not get the fair proportion of $b^m$ pairs $(k,n)$.
\end{proof}

\section{Constructing $\alpha$ by concatenating affine necklaces over $\FF_p$}\label{sec:3}

In this section we reintroduce so-called affine necklaces, in the sense of Becher and Carton \cite{Bec} for base $2$, in arbitrary prime base $p$.

Let $p\in\PP$. We construct a normal number $\alpha$ in base $p$ (cf. \cite{Bec}) as follows: We denote the representation of $\alpha$ in base $p$ by\\

$\alpha = 0 . \underset{\mathcal{A}_1}{\underbrace{\alpha_1\alpha_2 \ldots \alpha_{p^{(p^1+1)}}}} ~ \underset{\mathcal{A}_2}{\underbrace{\alpha_{p^{(p^1+1)}+1} \ldots \alpha_{p^{(p^1+1)}+p^{(p^2+2)}}}} \quad \ldots\ldots \quad \underset{\mathcal{A}_m}{\underbrace{\ldots\ldots\ldots\ldots}} \quad \ldots\ldots$.\\

Here the block $\mathcal{A}_m$ for $m\in\NN$ consists of $p^m \cdot p^{p^{m}}$ digits $\alpha_i$, which are constructed as follows. 

Let $m\in\NN$. Set $n_m := \sum_{j=1}^{m-1}p^j \cdot p^{p^{j}}$. Then block $\mathcal{A}_m$ starts with $\alpha_{n_m +1}$. We write the block $\mathcal{A}_m$ in the form

$$
\underbrace{d_0 (0) \ldots d_k(0) \ldots d_{p^m-1}(0)} \quad \underbrace{d_0(1) \ldots d_k(1) \ldots d_{p^m-1}(1)} \quad \ldots
$$
$$
\ldots \quad \underbrace{d_0(n) \ldots d_k(n) \ldots d_{p^m-1}(n)}  \quad\ldots \quad \underbrace{d_0(p^{p^{m}}-1)  \ldots d_{k}(p^{p^{m}}-1)  \ldots  d_{p^m-1}(p^{p^{m}}-1)}.
$$
The $d_k(n)$ for $k\in\{0,1,\ldots,p^m-1\}$ and for $n$ between $0$ and $p^{p^{m}}-1$ are computed as follows. 

Let $\boldsymbol{z}=\boldsymbol{z}(m)=(z_j)_{0\leq j<p^m}\in\{0,1,\ldots,p-1\}^{p^m}$ , $\boldsymbol{\eta}=\boldsymbol{\eta}(m)=(\eta_j)_{0\leq j<p^m}\in(\NN_0)^{p^m}$ and $\boldsymbol{u}=\boldsymbol{u}(m)=(u_j)_{0\leq j<p^m}\in\{0,1,\ldots,p-1\}^{p^m}$ be such that $\eta_0=0$ and $\eta_{j}\leq \eta_{j+1}\leq \eta_{j}+1$ for all $j\in\{0,1,\ldots,p^m-2\}$ and $u_j\not\equiv 0\pmod{p}$ for all $j\in\{0,1,\ldots,p^m-1\}$. 

We start with the unique base $p$ representation of $n\in\{0,1,\ldots., p^{p^m}-1\}$, i.e., $$n= e_0(n) + p \cdot e_1(n) + \ldots + p^{p^{m}-1} \cdot e_{p^m-1} (n)$$ with $e_i\in\{0,1,\ldots,p-1\}$ for $i=0,1,\ldots,{p^m}-1$.\\

Then we set $d_k(n) := p^{(\boldsymbol{\eta},\boldsymbol{u})}_{k,0} e_0(n) + \ldots + p^{(\boldsymbol{\eta},\boldsymbol{u})}_{k,p^m-1}e_{p^m-1}(n) +z_k\mod p$, where 
$$p^{(\boldsymbol{\eta},\boldsymbol{u})}_{i,j} := \binom{i+j-\eta_j}{j}u_j$$ for all non-negative integers $i,j\in\{0,1,\ldots,p^m-1\}$. \\

Such a block $\mathcal{A}_m$ is called \textit{an affine necklace} (cf. Becher and Carton \cite{Bec} for the special case where $p=2$ and $u_j=1$). \\

\begin{example}\label{examp:2}
Levin's normal number in arbitrary prime base $p$ is obtained by the construction described above with the setting $\boldsymbol{z}=\boldsymbol{0}$, $\boldsymbol{\eta}=\boldsymbol{0}$ and $\boldsymbol{u}=\boldsymbol{1}$ for every $m\in\NN$. (Here and later on, for $m\in\NN$ we set $\boldsymbol{0}:=(0)_{0\leq j<p^m}$ and $\boldsymbol{1}:=(1)_{0\leq j<p^m}$). 
\end{example}

In the following we identify specific properties of the coefficients $p^{(\boldsymbol{\eta},\boldsymbol{u})}_{i,j}$. Therefore, we define for $m\in\NN$ the $p^m \times p^m$-matrix $P^{(\boldsymbol{\eta},\boldsymbol{u})}$ as
\begin{eqnarray*}
P^{(\boldsymbol{\eta},\boldsymbol{u})}&:=& \left(p^{(\boldsymbol{\eta},\boldsymbol{u})}_{i,j}\right)_{0\leq i, j<p^m} = 
\begin{pmatrix} 
\binom{0-\eta_0}{0}u_0 & \binom{1-\eta_1}{1}u_1 & \ldots & \binom{{p^m-1}-\eta_{p^m-1}}{p^m-1}u_{p^m-1} \\
\binom{1-\eta_0}{0}u_0 & \binom{1+1-\eta_1}{1}u_1 & \ldots & \binom{{p^m-1}+1-\eta_{p^m-1}}{p^m-1}u_{p^m-1} \\
\vdots  & \ldots &\ldots  & \vdots \\
\binom{p^m-1-\eta_0}{0}u_0 & \binom{p^m-1+1-\eta_1}{1}u_1 & \ldots & \binom{{p^m-1}+p^m-1-\eta_{p^m-1}}{p^m-1}u_{p^m-1} \\ \end{pmatrix}\\
&=&\left(\binom{i+j-\eta_j}{j}u_j\right)_{0\leq i,j<p^m}.
\end{eqnarray*}

\begin{figure}[h]
	\centering
		\includegraphics[width=0.30\textwidth]{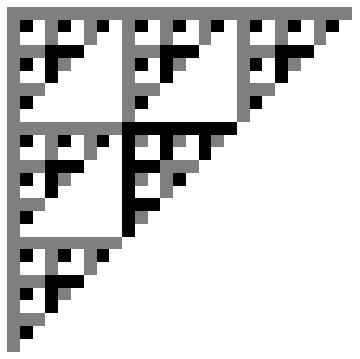}
	\hspace{0.7cm}\includegraphics[width=0.30\textwidth]{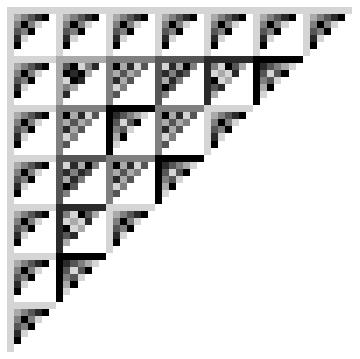}
	\caption{Upper left $p^m\times p^m$ submatrices of $P^{(\boldsymbol{\eta},\boldsymbol{u})}\pmod{p}$ with $\boldsymbol{u}=\boldsymbol{1}$ and $\boldsymbol{\eta}=\boldsymbol{0}$ for $p=3,\,m=3$ (left) and $p=7,\,m=2$ (right).}\label{fig:Mat_1}
\end{figure}

\begin{figure}[h]
	\centering
		\includegraphics[width=0.30\textwidth]{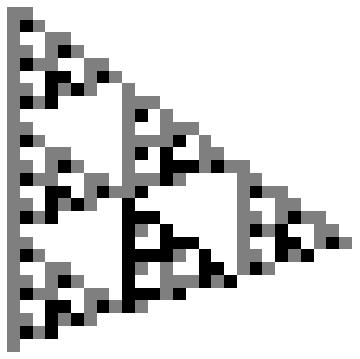}
	\hspace{0.7cm}\includegraphics[width=0.30\textwidth]{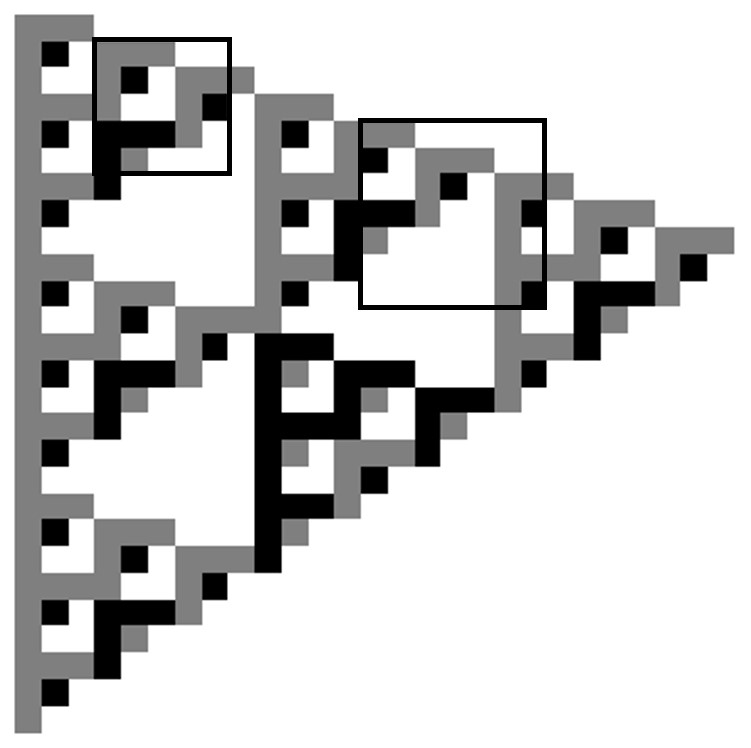}
	\caption{Upper left $3^3\times 3^3$ submatrices of $P^{(\boldsymbol{\eta},\boldsymbol{u})}\pmod{3}$ for $\boldsymbol{u}=\boldsymbol{1}$ and with $\eta_j=\lfloor j/\sqrt{2}\rfloor$ (left) and  $\eta_j=\lfloor j/3\rfloor$ (right). The sqares in the right image show two examples of submatrices described by \eqref{P:cond:2} in Lemma \ref{lem:rank} with $t=5$, $l=3$ and therefore $\eta_l=1$ and with $t=7, l=13$ and therefore $\eta_l=4$.}\label{fig:Mat_2}
\end{figure}

We collect some nice properties of these $p^m \times p^m$ matrices $P^{(\boldsymbol{\eta},\boldsymbol{u})}$ modulo $p$ with $p\in\PP$. Figures \ref{fig:Mat_1} and \ref{fig:Mat_2} show some examples of such matrices. In particular, Figure \ref{fig:Mat_2} shows that $\eta_j$ denotes a downshift of the $j$th column of the ``unshifted'' case with $\boldsymbol{\eta}=\boldsymbol{0}$, see Figure \ref{fig:Mat_1}. 

\begin{lemma}\label{lem:rank} 
Let $p\in\PP$ and $m\in\NN$. Then the $p^m\times p^m$ matrix $P^{(\boldsymbol{\eta},\boldsymbol{u})}\pmod{p}$ 
\begin{enumerate}
	\item in the case where $\boldsymbol{\eta}=\boldsymbol{0}$ and $\boldsymbol{u}=\boldsymbol{1}$ is an upper anti-diagonal triangular matrix with $1$s in the first row and first column, with an anti-diagonal containing just nonzero entries. 
	\item satisfies that any $t\times t$ square submatrix with $t\in\{1,\ldots,p^m\}$ of the form 
	\begin{equation}\label{P:cond:1}
	\left(\binom{i+j-\eta_j}{j}u_j\right)_{i=l,\ldots,l+t-1,\,j=0,\ldots t-1},\end{equation}
	with $l\in\NN_0$ such that $l+t\leq p^m$ is regular modulo $p$.
	\item satisfies that any $t\times t$ square submatrix with $t\in\{1,\ldots,p^m\}$
	\begin{equation}\label{P:cond:2}
	\left(\binom{i+j-\eta_j}{j}u_j\right)_{i=\eta_l,\ldots,\eta_l+t-1,\,j=l,\ldots l+t-1},\end{equation}
	with $l\in\NN_0$ such that $l+t\leq p^m$, 
i.e. the upper left corner is at a position with an entry stemming from the first row of the unshifted case $\boldsymbol{\eta}=\boldsymbol{0}$, is regular modulo $p$. 
\end{enumerate}
\end{lemma}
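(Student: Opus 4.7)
The plan is to prove the three parts separately, each with a distinct technique, and I expect Part~(3) to carry almost all the difficulty.

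For Part~(1), I would apply Lucas' theorem, which expresses $\binom{i+j}{j}\pmod p$ as a product of binomials over the base-$p$ digits. The claims about the first row and first column are immediate from $\binom{j}{j}=\binom{i}{0}=1$. On the anti-diagonal $i+j=p^m-1$, every base-$p$ digit of $p^m-1$ equals $p-1$, so Lucas yields $\prod_{k=0}^{m-1}\binom{p-1}{j_k}$, a product of nonzero residues. For entries strictly below the anti-diagonal ($i+j\geq p^m$), write $i+j=p^m+s$ with $0\leq s\leq p^m-2$; then $i\leq p^m-1$ forces $j\geq s+1$, so $j>s$, and the highest base-$p$ position where $j$ and $s$ differ satisfies $j_k>s_k$, making the Lucas factor $\binom{s_k}{j_k}$ vanish.

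For Part~(2), substitute $s:=i-l$ and view column $j$ of the submatrix as the polynomial $q_j(s):=u_j\binom{l+s+j-\eta_j}{j}$, which over $\mathbb{Q}$ has degree exactly $j$ and leading coefficient $u_j/j!$. Evaluating these $t$ polynomials of distinct degrees $0,1,\ldots,t-1$ at the $t$ points $s=0,1,\ldots,t-1$ gives a matrix whose determinant is the product of leading coefficients times the Vandermonde determinant $\prod_{0\leq s<s'\leq t-1}(s'-s)=\prod_{k=0}^{t-1}k!$. The factorials cancel, leaving $\det=\prod_{j=0}^{t-1}u_j\in\mathbb{Z}$, which is nonzero modulo $p$ by the assumption on the $u_j$.

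For Part~(3), I first factor out the $u_{l+r}$ and re-parametrise via $s:=i-\eta_l$, $\delta_r:=\eta_{l+r}-\eta_l$, $y_r:=l+r-\delta_r$, reducing the claim to $\det\hat M=1$ over $\mathbb{Z}$ for $\hat M_{s,r}:=\binom{y_r+s}{l+r}$ with $s,r\in\{0,\ldots,t-1\}$. The polynomial argument of Part~(2) fails because the column polynomial degrees $l,l+1,\ldots,l+t-1$ exceed the number of evaluation points. Instead, apply Vandermonde's convolution $\binom{y_r+s}{l+r}=\sum_{j}\binom{s}{j}\binom{y_r}{l+r-j}$; since $\binom{s}{j}=0$ for $j>s$, this factors $\hat M=\tilde V\tilde U$ with $\tilde V_{s,j}=\binom{s}{j}$ lower-triangular and unitary on the diagonal (so $\det\tilde V=1$) and $\tilde U_{j,r}=\binom{y_r}{l+r-j}$. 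It thus suffices to show $\det\tilde U=1$ by induction on $t$. The first row of $\tilde U$ has entries $\binom{y_r}{l+r}$, equal to $1$ iff $\delta_r=0$ and $0$ otherwise; setting $r^{*}:=\max\{r:\delta_r=0\}$, the column operation that replaces column $r$ by column~$r$ minus column~$(r-1)$ for $r=1,\ldots,r^*$ turns the first row into $(1,0,\ldots,0)$ via Pascal's identity $\binom{l+r}{j}-\binom{l+r-1}{j}=\binom{l+r-1}{j-1}$. Expanding along this row and re-indexing, the resulting $(t-1)\times(t-1)$ minor is again a $\tilde U$-matrix with $l'=l$ and $\delta'_{r'}=0$ for $r'<r^*$, $\delta'_{r'}=\delta_{r'+1}-1$ for $r'\geq r^*$.

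The main obstacle is Part~(3): one must verify that $\delta'$ still satisfies the structural conditions ($\delta'_0=0$, consecutive differences in $\{0,1\}$, $\delta'_{r'}\leq r'$), so that the induction is well-posed. Once this bookkeeping is dispatched, the base case $t=1$ (where $\tilde U=(\binom{l}{l})=(1)$) closes the argument.
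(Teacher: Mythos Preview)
Your argument is correct throughout. Part~(1) matches the paper's use of Lucas' theorem. Your Part~(2) is genuinely different and cleaner: the paper invokes an external determinant identity for Pascal-type matrices (from \cite{Bic}) to handle the case $\boldsymbol{\eta}=\boldsymbol{0}$, then performs iterated row subtractions $R_k\mapsto R_k-R_{k-1}$ to reduce the general shift to an upper-triangular matrix with diagonal $(u_0,\ldots,u_{t-1})$; you instead compute the determinant in one stroke over $\mathbb{Z}$ via the polynomial/Vandermonde factorisation, obtaining $\det=\prod_{j}u_j$ directly without any external reference or case distinction on~$\boldsymbol{\eta}$. For Part~(3) the two arguments are closer than you might expect: the paper also performs the column subtractions $C_r\mapsto C_r-C_{r-1}$ for the columns with $\delta_r=0$, clears the first row to $(1,0,\ldots,0)$, and recognises the remaining minor as a matrix of the same type with the shifts bumped up by one---exactly your inductive step. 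Your Vandermonde-convolution factorisation $\hat M=\tilde V\tilde U$ is an extra layer the paper does not introduce (it works directly with $\hat M$, whose first row coincides with that of your $\tilde U$), but it does no harm and makes the induction hypothesis tidier. Two small points of care: the column subtractions must be carried out from right to left (i.e.\ $r=r^*,r^*-1,\ldots,1$) so that each new column is the difference of \emph{original} columns; and the bookkeeping you flag---that $\delta'_0=0$ and $\delta'_{r'+1}-\delta'_{r'}\in\{0,1\}$---indeed goes through, the only nontrivial check being at $r'=r^*-1$, where $\delta_{r^*+1}=1$ forces $\delta'_{r^*}=0$.
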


\begin{proof}
The statement in the first item on the first column and on the first row is obvious. The claim for the anti-diagonal entries, (i.e. $i+j=p^m-1)$, follows from Lucas' Theorem (see Lemma \ref{lem:Lucas_Thm}) $\binom{i+j}{j}=\binom{p^m-1}{j}\equiv\prod_{k=0}^{m-1}\binom{p-1}{j_k}\not\equiv 0\pmod{p}$ where $j=j_0+j_1p+\cdots+j_{m-1}p^{m-1}$ is the unique base $p$ representation of $j$. For the lower anti-diagonal entries note that if $i+j\geq p^m$ then (again by Lucas' Theorem see Lemma \ref{lem:Lucas_Thm}) $\binom{i+j}{j}\equiv\binom{1}{0}\binom{i+j-p^m}{j}\equiv 0 \pmod{p}$ as obviously $i+j-p^m<j$. \\

The second item in the case where $\boldsymbol{\eta}=\boldsymbol{0}$ is an immediate consequence of \cite[Theorem~1.3]{Bic} which ensures that the corresponding submatrices of Pascal's matrix $(\binom{i+j}{j})_{i,j\geq 0}$ have nonzero determinant. 

For the claim of first $t\times t$ submatrix \eqref{P:cond:1}, we proceed as in \cite[Proof of Lemma~4]{Bec} and use $\binom{i+j-\eta_j}{j}-\binom{i+j-\eta_j-1}{j}=\binom{i+j-\eta_j-1}{j-1}$ for proper row-manipulations to end up with an upper right regular triangular matrix. 
We leave the first row unchanged and substitute the $k$th row for $k=2,\ldots,t$ with the $k$th row minus the $(k-1)$th row. Consequently, the first column of the new matrix is $(u_0,0,\ldots,0)^T$ and its lower right $(t-1)\times (t-1)$-submatrix $Q_{t-1}$ satisfies
$$Q_{t-1}=\left(\left\{\begin{array}{ll}
	\binom{i+j-1-\eta_{j-1}}{j-1}u_j&\mbox{ if }\eta_j=\eta_{j-1}\\
	\binom{i-1+j-1-\eta_{j-1}}{j-1}u_j&\mbox{ if }\eta_j=\eta_{j-1}+1
\end{array}\right.\right)_{1\leq i\leq t-1,1\leq j\leq t-1}.$$
Note that the first column of $Q_{t-1}$ consists of $u_1$ entries exclusively. 

In the second step we leave the first row in $Q_{t-1}$ unchanged and substitute the $k$th row for $k=2,\ldots,t-1$ with the $k$th row minus the $(k-1)$th row. Then 
the first column of the new matrix is $(u_1,0,\ldots,0)^T$ and its lower right $(t-2)\times (t-2)$-submatrix $Q_{t-2}$ satisfies
$$Q_{t-2}=\left(\left\{\begin{array}{ll}
	\binom{i+j-2-\eta_{j-2}}{j-2}u_j&\mbox{ if }\eta_j=\eta_{j-2}\\
	\binom{i-1+j-2-\eta_{j-2}}{j-2}u_j&\mbox{ if }\eta_j=\eta_{j-2}+1\\
	\binom{i-2+j-2-\eta_{j-2}}{j-2}u_j&\mbox{ if }\eta_j=\eta_{j-2}+2
\end{array}\right.\right)_{2\leq i\leq t-1,2\leq j\leq t-1}.$$
We repeat this step and obtain an upper triangular matrix with diagonal $(u_0,u_1,\ldots,u_{t-1})$ that is of course regular. \\

For the claim of the second $t\times t$ submatrix \eqref{P:cond:2}, we proceed as in \cite[Proof of Lemma~5]{Bec} by analogous column manipulations as for the submatrix above. First we multiply each column with $u_j^{-1}$. Note that the first column of this submatrix is of the form $(1,\binom{l+1}{l},\ldots,\binom{l+t-1}{l})^T$. We leave the first column unchanged and substitute the $k$th column, for $k=l+1,\ldots,l+r$ where $r\in\{0,1,\ldots,t-1\}$ is chosen maximal such that $\eta_l=\eta_{l+1}=\cdots=\eta_{l+r}$, with the $k$th column minus $(k-1)$th column. Consequently, the first row of the new matrix is $(1,0,\ldots,0)$ and its lower right $(t-1)\times (t-1)$-submatrix $Q_{t-1}$ satisfies
$$Q_{t-1}=\left(\left\{\begin{array}{ll}
	\binom{i+k-1-\eta_{k}}{k}&\mbox{ if }k\in\{l+1,\ldots,l+r\}\\
	\binom{i+k-\eta_{k}}{k}&\mbox{ else. }
\end{array}\right.\right)_{\eta_{l}+1\leq i\leq \eta_l+t-1,l+1\leq k\leq l+t-1}.$$
This step can be seen as the transformation $\eta_k\mapsto \eta_k+1$ for $k=l+1,\ldots,l+r$. 

For $Q_{t-1}$ we proceed analogously. Then for $Q_{t-2}$ etc. until we end up in the case where $\eta_{l+j}=\eta_l+j$ for $j=1,\ldots t-1$. 
For this setting  $\eta_{l+j}=\eta_l+j$ the $t\times t$ submatrix in \eqref{P:cond:2} is a lower triangular matrix with all diagonal entries equal to $1$. Hence this lower diagonal matrix is of course regular.

\end{proof}

\begin{remark}\label{rem:affine}{\rm 
From Lemma \ref{lem:rank} Item (1) we know that $P^{(\boldsymbol{\eta},\boldsymbol{u})}$ modulo $p$ is invertible modulo $p$. Thus an alternative construction of $d_k(n)$ in the affine necklace $\mathcal{A}_m$ is 
$$d_k(n) := p^{(\boldsymbol{\eta},\boldsymbol{u})}_{k,0} (e_0(n)+z'_0) + \ldots + p^{(\boldsymbol{\eta},\boldsymbol{u})}_{k,p^m-1}(e_{p^m-1}(n)+z'_{p^m-1})\mod p$$
with $\boldsymbol{z}'=(z'_0,\ldots,z'_{p^m-1})$ satifying $P^{(\boldsymbol{\eta},\boldsymbol{u})}(\boldsymbol{z}')^T=(\boldsymbol{z})^T\pmod{p}$. 
}
\end{remark}

Becher and Carton \cite[Lemma~6]{Bec} showed an interdependence between binary nested perfect necklaces, that also holds for $b$-ary necklaces over an alphabet, e.g., $\{0,1,\ldots,b-1\}$ where we use arithmetics modulo $b$. The proof of the following lemma is a straightforward adaption of the arguments of \cite[Proof of Lemma~6]{Bec} that relies on the ``perfect'' property. 

\begin{lemma}\label{lem:affine}
Let $w$ be a word of length $lb^k$ and $z$ be a word of length $l$ over $\{0,1,\ldots,b-1\}$. Furthermore, let $z^{b^k}$ denote the word of length $lb^k$ that is built by concatinating $b^k$ copies of $z$. We define the $b$-ary word $w'=w+z^{b^k}\mod b$. Then $w$ is a $(k,l)$-nested perfect $b$-ary necklace if and only if $w'$ is a $(k,l)$-nested perfect $b$-ary necklace. 
\end{lemma}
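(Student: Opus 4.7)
The plan is to reduce the biconditional to a single direction by symmetry (since $w = w' - z^{b^k} \bmod b$ has the same form) and then to localize the argument one block at a time. So assume $w$ is $(k,l)$-nested perfect and show that $w'=w+z^{b^k}\bmod b$ is also $(k,l)$-nested perfect. Fix $j\in\{1,\ldots,k\}$ and consider a block $B'$ of $w'$ of length $lb^j$ starting at a position $1+mlb^j$; it equals $B+S\bmod b$, where $B$ is the corresponding block of $w$ (which is $(j,l)$-perfect by hypothesis) and $S$ is the corresponding block of $z^{b^k}$. Because $mlb^j$ is a multiple of $l$ and $z^{b^k}$ is $l$-periodic, one observes that $S$ is exactly $z^{b^j}$. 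So the lemma reduces to the following key claim.

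\textbf{Key claim.} If $\bar w$ is a $(j,l)$-perfect $b$-ary necklace of length $lb^j$, then $\bar w + z^{b^j}\bmod b$ is again a $(j,l)$-perfect $b$-ary necklace.

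The plan for this claim is to sort the $lb^j$ starting positions of length-$j$ blocks by their residue modulo $l$. For each residue class $r\in\{0,1,\ldots,l-1\}$, there are exactly $b^j$ such starting positions, and by the $(j,l)$-perfect property of $\bar w$ these $b^j$ length-$j$ blocks realize each of the $b^j$ possible length-$j$ words over $\{0,1,\ldots,b-1\}$ exactly once (this is the precise reformulation of ``each word of length $j$ occurs at positions different modulo $l$''). Now the shift added to all length-$j$ blocks of $\bar w$ starting in residue class $r$ is one and the same word $s_r\in\{0,1,\ldots,b-1\}^j$, since $z^{b^j}$ is $l$-periodic. Because translation by a fixed word $s_r$ is a bijection on $\{0,1,\ldots,b-1\}^j$, the new length-$j$ blocks at the residue-$r$ positions again exhaust every word of length $j$ exactly once. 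Summing over the $l$ residue classes gives exactly $l$ occurrences of every length-$j$ word at positions covering all residues modulo $l$, which is the $(j,l)$-perfect property for $\bar w + z^{b^j}$.

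Applying the key claim to every $j\in\{1,\ldots,k\}$ and every starting position $1+mlb^j$, the $(k,l)$-nested perfect property transfers from $w$ to $w'$. The converse direction follows by the same argument with $z$ replaced by $-z\bmod b$, since then $w=w'+(-z)^{b^k}\bmod b$. The main potential obstacle is purely a matter of careful bookkeeping: one has to verify that the ``positions different modulo $l$'' formulation is equivalent to the ``one representative per residue class'' reformulation used above, and that the circular (necklace) convention is respected when a length-$j$ block wraps around the clasp of a $(j,l)$-perfect block of length $lb^j$. Both points follow directly from the definitions; no further arithmetic structure of the necklace is needed.
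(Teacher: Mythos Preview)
Your proof is correct and is precisely the natural argument the paper has in mind: the paper does not spell out a proof but simply refers to \cite[Proof of Lemma~6]{Bec}, noting that the argument ``relies on the `perfect' property,'' and your residue-class bijection argument (fixed shift $s_r$ on each residue class $r$ modulo $l$, hence a permutation of $\{0,\ldots,b-1\}^j$) is exactly that adaptation. The reduction to the key claim via $l$-periodicity of $z^{b^k}$ and the symmetry for the converse are also handled correctly.
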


\begin{proposition}\label{prop:1}
The affine necklace $\mathcal{A}_m$ is a $(p^m,p^m)$-nested perfect $p$-ary necklace. 
\end{proposition}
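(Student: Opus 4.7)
The plan is to verify the defining properties of a $(p^m,p^m)$-nested perfect necklace for $\mathcal{A}_m$, leaning on the matrix regularity results in Lemma~\ref{lem:rank}. First I would absorb the constant shift $\boldsymbol{z}$ via Lemma~\ref{lem:affine} applied with $b=p$ and $k=l=p^m$: the general affine necklace is the coordinate-wise mod-$p$ sum of the $\boldsymbol{z}=\boldsymbol{0}$ necklace and the word $z_0\cdots z_{p^m-1}$ repeated $p^{p^m}$ times, so it suffices to treat the case $\boldsymbol{z}=\boldsymbol{0}$.

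Next, fix $j\in\{1,\ldots,p^m\}$ and a sub-block of length $p^m p^j$ starting at a position $\equiv 1\pmod{p^m p^j}$, indexed by some $N$. Writing $n=Np^j+r$ yields $d_k(n)=\sum_{s=0}^{j-1}p^{(\boldsymbol{\eta},\boldsymbol{u})}_{k,s}e_s(r)+c_k(N)$ with $c_k(N)$ independent of $r$. The $(j,p^m)$-perfect property then reduces, for each residue class $k_0\in\{0,\ldots,p^m-1\}$, to the bijectivity of the map $\phi_{k_0}\colon r\mapsto(\text{cyclic word of length }j\text{ at position }k_0+1+rp^m\text{ of the sub-block})$ from $\{0,\ldots,p^j-1\}$ onto $\{0,\ldots,p-1\}^j$.

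In the non-spanning case $k_0+j\leq p^m$, the word lies entirely inside sub-sub-block $r$, so $\phi_{k_0}$ is affine in $e(r)=(e_0(r),\ldots,e_{j-1}(r))$ with linear part the $j\times j$ submatrix of $P^{(\boldsymbol{\eta},\boldsymbol{u})}$ formed from rows $k_0,\ldots,k_0+j-1$ and columns $0,\ldots,j-1$. This submatrix is regular modulo $p$ by Lemma~\ref{lem:rank}~(2) with $l=k_0$, $t=j$, so $\phi_{k_0}$ is a bijection.

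The hard part will be the spanning case $k_0+j>p^m$, where the word crosses the boundary from sub-sub-block $r$ to $r^+:=(r+1)\bmod p^j$. Using the carry identity $e_s(r^+)\equiv e_s(r)+\Delta_s(r)\pmod p$, where $\Delta(r)$ has $t(r)+1$ leading $1$'s and $t(r)$ is the number of trailing $(p-1)$'s in the base-$p$ expansion of $r$ (with $t(p^j-1):=j-1$), I would rewrite $\phi_{k_0}(r)=\tilde P\,e(r)+\Phi(t(r))+C$, where $\tilde P$ is the $j\times j$ matrix built from rows $k_0,\ldots,p^m-1,0,\ldots,k'-1$ (with $k':=k_0+j-p^m$) and the first $j$ columns of $P^{(\boldsymbol{\eta},\boldsymbol{u})}$, while $\Phi(t)$ is an explicit carry correction. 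Bijectivity of $\phi_{k_0}$ would then follow by showing that for each nonzero $\boldsymbol{t}\in\FF_p^j$ the character sum $\sum_{r=0}^{p^j-1}\omega^{\langle\boldsymbol{t},\phi_{k_0}(r)\rangle}$ with $\omega=e^{2\pi i/p}$ vanishes. Partitioning $r$ by the level sets $\{r:t(r)=t^*\}$ and invoking the regularity of the top and bottom row-blocks of $\tilde P$ (via Lemma~\ref{lem:rank}~(2) applied with both $l=k_0$ and $l=0$) reduces this vanishing to a telescoping identity among the quantities $\omega^{\boldsymbol{t}^T\Phi(t^*)}p^{j-1-t^*}$. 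Establishing that identity, which rests on showing that $\gamma(t):=\boldsymbol{t}^T\Phi(t)$ is constant on the range $\{s^*,\ldots,j-1\}$ (with $s^*:=\max\{s:(\tilde P^T\boldsymbol{t})_s\neq 0\}$) as a consequence of the finer column relations of $P^{(\boldsymbol{\eta},\boldsymbol{u})}$ packaged by Lemma~\ref{lem:rank}, is the technical crux of the argument.
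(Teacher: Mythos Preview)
Your reduction to $\boldsymbol{z}=\boldsymbol{0}$ via Lemma~\ref{lem:affine} and your treatment of the non-spanning case $k_0+j\leq p^m$ via Lemma~\ref{lem:rank}(2) coincide with the paper's proof. The divergence is in the spanning case, and there your plan has a genuine gap.

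You propose to prove bijectivity of $\phi_{k_0}$ by a character-sum/telescoping argument, and you reduce everything to two unproved assertions: that the level-set sums assemble into a telescoping identity, and that $\gamma(t)=\boldsymbol{t}^T\Phi(t)$ is constant on $\{s^*,\ldots,j-1\}$ ``as a consequence of the finer column relations of $P^{(\boldsymbol{\eta},\boldsymbol{u})}$ packaged by Lemma~\ref{lem:rank}''. But Lemma~\ref{lem:rank} asserts only the regularity of certain square submatrices; it contains no column relations of the type you need, and no such constancy follows from it. Moreover, you invoke only Lemma~\ref{lem:rank}(2), applied at $l=k_0$ and $l=0$; this gives regularity of the top-left $(j-k')\times(j-k')$ and bottom-left $k'\times k'$ blocks of $\tilde P$, which says nothing about the right-hand columns and hence neither about the invertibility of $\tilde P$ nor about the vanishing of $(\tilde P^T\boldsymbol{t})_s$ for $s>s^*$ that your telescoping would require. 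In short, the ``technical crux'' you flag is not a routine calculation left to the reader but an unjustified claim resting on a lemma that does not supply the needed input.

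The paper avoids the character-sum machinery entirely. In the spanning case it works directly with the two linear systems for $(e_s(k))$ and $(e_s(k+1))$, locates the rightmost $r_2\times r_2$ block of the second system whose upper-left entry lies in the ``first row'' of the unshifted matrix, and then applies Lemma~\ref{lem:rank}(3) (not~(2)) to that block. This rearranges the combined rows into a lower-triangular system which determines the base-$p$ digits of $k$ and $k+1$ one at a time; the carry from $k$ to $k+1$ is then handled by the observation that the only potentially inconsistent pattern---all determined digits of $k$ equal to $p-1$ and all determined digits of $k+1$ equal to $0$---occurs precisely once, at the necklace clasp. If you want to repair your argument, the cleanest route is to drop the exponential sums and adopt this direct digit-by-digit determination; the essential tool you are missing is Lemma~\ref{lem:rank}(3).
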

\begin{proof}
The proof is carried out in accordance with \cite[Proof of Proposition~7]{Bec}. From the definition of affine necklaces together with Lemma \ref{lem:affine} it is easily seen that it suffices to prove the case where $z'=z=(0,\ldots,0)$. 

Let $w\in\{1,\ldots,p^m\}$, $v:=p^m-w$ and $B=Ap^w$ with $A\in\{0,1,\ldots,p^{v}-1\}$. We consider the subword of $\mathcal{A}_m$ built by concatenating $d_0(n)d_1(n)\ldots d_{p^m-1}(n)$ for $n=Ap^w,Ap^w+1,\ldots,Ap^w+p^w-1$ and have to show that this word of length $p^mp^w$ is a perfect $(w,p^m)$ necklace. 

Let $r\in\{0,\ldots,p^m-1\}$ be the position where the block $(c_0,\ldots,c_{w-1})$ of length $w$ is starting. 

We distinguish two cases: $r+w\leq p^m$ and $r+w> p^m$. 
\begin{itemize}
	\item Let $r+w\leq p^m$. We ask for the number of $n\in\{Ap^w,Ap^w+1,\ldots,Ap^w+p^w-1\}$ such that \begin{equation}\label{cond:case1}d_r(n)d_{r+1}(n)\ldots d_{r+w-1}(n)=c_0\ldots c_{w-1}.\end{equation} We have to ensure that exactly one value of such $n$ satisfies our condition \eqref{cond:case1}. 
	We translate the condition \eqref{cond:case1} to the construction of the corresponding subword of $\mathcal{A}_m$. Equivalently we need to show that
\begin{eqnarray} \label{mat:case1}
\left(\binom{i+j-\eta_j}{j}u_j\right)_{r\leq i<r+w,0\leq j<p^m}\begin{pmatrix}k_0\\\vdots\\k_{w-1}\\a_0\\\vdots\\a_{p^v-1}\end{pmatrix}=\begin{pmatrix}c_{0}\\\vdots\\c_{w-1}, \end{pmatrix}
\end{eqnarray}
where $a_0+a_1p+\cdots+a_{p^v-1}p^{p^v-1}$ is the unique $p$-ary expansion of $A$ and $k_0+k_1p+\cdots +k_{w-1}p^{w-1}$ is the unique $p$-ary expansion of $k\in\{0,1,\ldots,p^w-1\}$, has a unique solution for $k\in\{0,1,\ldots,p^w-1\}$. 

From Lemma \ref{lem:rank} Item (2) we know that the left square $w\times w$ submatrix of the matrix in \eqref{mat:case1} is regular and the result immediately follows. 

\item Let $r+w> p^m$. We define $r_1:=r+w-p^m$ and $r_2=w-r_1$ and ask for the number of $n\in\{Ap^w,Ap^w+1,\ldots,Ap^w+p^w-1\}$ such that \begin{equation}\label{cond:case2}d_{p^m-r_1}(n)\ldots d_{p^m-1}(n)d_0(n+1)\ldots d_{r_2-1}(n+1)=c_0\ldots c_{w-1}.\end{equation}
	We translate the condition \eqref{cond:case2} to the construction of the corresponding subword of $\mathcal{A}_m$: We need to prove that 
\begin{eqnarray*} 
\left(\binom{i+j-\eta_j}{j}u_j\right)_{p^m-r_1\leq i<p^m,0\leq j<p^m}\begin{pmatrix}k_0\\\vdots\\k_{w-1}\\a_0\\\vdots\\a_{p^v-1}\end{pmatrix}=\begin{pmatrix}c_{0}\\\vdots\\c_{r_1-1}, \end{pmatrix}
\end{eqnarray*}

together with 

\begin{eqnarray*}
\left(\binom{i+j-\eta_j}{j}u_j\right)_{0\leq i<r_2,0\leq j<p^m}\begin{pmatrix}(k+1)_0\\\vdots\\(k+1)_{w-1}\\a_0\\\vdots\\a_{p^v-1}\end{pmatrix}=\begin{pmatrix}c_{r_1}\\\vdots\\c_{w-1}, \end{pmatrix}.
\end{eqnarray*}

have a unique solution $k\in\{0,1,\ldots,p^w-1\}$, where $k+1$ for $k=p^w-1$ is set $0$.

Or equivalently 
\begin{eqnarray} \label{mat:case21}
\left(\binom{i+j-\eta_j}{j}u_j\right)_{p^m-r_1\leq i<p^m,0\leq j<w}\begin{pmatrix}k_0\\\vdots\\k_{w-1}\end{pmatrix}=\begin{pmatrix}c'_{0}\\\vdots\\c'_{r_1-1}, \end{pmatrix}
\end{eqnarray}

and 

\begin{eqnarray}\label{mat:case22}
\left(\binom{i+j-\eta_j}{j}u_j\right)_{0\leq i<r_2,0\leq j<w}\begin{pmatrix}(k+1)_0\\\vdots\\(k+1)_{w-1}\end{pmatrix}=\begin{pmatrix}c'_{r_1}\\\vdots\\c'_{w-1}, \end{pmatrix}.
\end{eqnarray}

have a unique solution $k\in\{0,1,\ldots,p^w-1\}$, where again $k+1$ for $k=p^w-1$ is set $0$.

We search the rightmost $r_2\times r_2$-submatrix $$\left(\binom{i+j-\eta_j}{j}u_j\right)_{0\leq i<r_2,s\leq j<s+r_1}$$ in \eqref{mat:case22} such that in the upper left corner is a nonzero entry. Hence $\eta_s=\eta_{s-1}=\cdots=\eta_0=0$. 

First, suppose $s=r_2$, then the left $r_1\times r_1$ submatrix in \eqref{mat:case21} is a regular upper anti-diagonal triangular matrix and the right $r_1\times r_2$ submatrix is the zero matrix. Hence the first $r_1$ digits of $k$ and therefore of $k+1$ are uniquely determined. 
From the third item in Lemma \ref{lem:rank} we know that then the $r_2\times r_2$-submatrix $$\left(\binom{i+j-\eta_j}{j}u_j\right)_{0\leq i<r_2,r_2\leq j<r_1+r_2}$$
is regular and the last $r_2$ digits of $k+1$ are uniquely determined. 

The only problem occurs in the case where the first $r_2$ digits of $k+1$ have to be $0$ and the last $r_1$ digits of $k$ have to be $q-1$, then the solution of $k$ must be $p^w-1$ and the block occurs indeed at the necklace clasp. 

In the general case $s\leq r_2$ we observe that from the definition of $P^{(\boldsymbol{\eta},\boldsymbol{u})}$ the rows in the matrices in \eqref{mat:case22} and \eqref{mat:case21} might be rearranged in a regular lower-triangular matrix. 
The first row determines the first digit of $k$ or $k+1$ respectively, depending on which submatrix the first row comes from. The second row determines then the second digit of $k$ or $k+1$ respectively, etc. 
Finally, $r_1$ digits of $k$ are fixed and the remaining $r_2$ digits of $k+1$ are fixed. We have to investigate which combination of digits might have more than one solution or no solution for $k$.  
No problem occurs if the first digit of $k$ is $\neq q-1$ or the first digit of $k+1$ is $\neq 0$. Otherwise we have to check if the carries from $k$ to $k+1$ might be possible. Again the only problem might occur if the fixed digits of $k$ are all $q-1$ and the fixed digits of $k+1$ are $0$. But again we have exactly one solution for such $k$ at the necklace clasp.

\end{itemize}

\end{proof}

From Proposition \ref{prop:1}, Theorem \ref{thm:1}, and the first item in Example \ref{examp:1} we immediately obtain the following Corollary \ref{coro:2}. 

\begin{corollary}\label{coro:2}
Let $\alpha\in[0,1)$ be such that its $p$-ary representation is the concatenation of affine necklaces $\mathcal{A}_m$ for $m=1,2,3,\ldots$. Then its star discrepancy $D_N^*$ satisfies
$$ND^*_N=O_p(\log^2 N).$$ 
\end{corollary}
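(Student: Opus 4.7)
The plan is to chain together Proposition~\ref{prop:1}, Theorem~\ref{thm:1}, and the first item of Example~\ref{examp:1}, which together essentially provide all of the needed ingredients.

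First I would invoke Proposition~\ref{prop:1} to observe that for every $m \in \NN$, the block $\mathcal{A}_m$ is a $(p^m,p^m)$-nested \emph{perfect} $p$-ary necklace, and hence a fortiori a $(p^m,p^m)$-nested \emph{semi-perfect} $p$-ary necklace. Therefore the base $p$ expansion of $\alpha$ is, by construction, a concatenation of $p$-ary $(f(j),g(j))$-nested semi-perfect necklaces with the particular choice $f(j)=g(j)=p^j$. This places $\alpha$ exactly in the setting of Theorem~\ref{thm:1}.

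Next I would apply Theorem~\ref{thm:1} to this $\alpha$. For a given $N$, the integer $m$ in the theorem is chosen so that $\sum_{j=1}^{m-1}p^j \cdot p^{p^j} \leq N < \sum_{j=1}^{m}p^j \cdot p^{p^j}$, which gives the estimate
\[
ND_N^* \;\leq\; \sum_{j=1}^{m-1} p^j + \sum_{j=1}^{m} p^j + (p-1)\tfrac{p^{2m}}{2} + (p-1)p^{m}\cdot p^{m}.
\]
All four summands are $O_p(p^{2m})$, so $ND_N^* = O_p(p^{2m})$.

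Finally, I would translate $m$ into a quantity in $N$ exactly as done in Example~\ref{examp:1}(1): from $N \geq n_m = \sum_{j=1}^{m-1} p^{j+p^j}$ one sees that $N \geq p^{p^{m-1}}$ for $N$ large, whence $p^{m-1} \leq \log_p N$ and therefore $p^m \leq p \log_p N = O_p(\log N)$. Inserting this into $ND_N^* = O_p(p^{2m})$ gives $ND_N^* = O_p(\log^2 N)$, which is the desired bound.

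No step here is a genuine obstacle once Proposition~\ref{prop:1} and Theorem~\ref{thm:1} are in place; the proof is really just the observation that the affine necklace construction fits the hypotheses of Theorem~\ref{thm:1} with the particular parameter choice treated in Example~\ref{examp:1}(1). The only small care needed is to verify that the ranges of $N$ between consecutive $n_m$'s are handled correctly, but this is built into the statement of Theorem~\ref{thm:1}.
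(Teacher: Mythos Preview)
Your proposal is correct and follows exactly the same route as the paper, which simply states that Corollary~\ref{coro:2} follows immediately from Proposition~\ref{prop:1}, Theorem~\ref{thm:1}, and the first item in Example~\ref{examp:1}. Your write-up merely spells out the arithmetic behind Example~\ref{examp:1}(1), which is fine.
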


\begin{remark}
In \cite{Bec} it was shown that the set of all affine necklaces with $m\in\NN$ over $\FF_2$ equals the set of all binary $(2^m,2^m)$-nested perfect necklaces. It is an open problem to compare the set of all $p$-ary $(p^m,p^m)$-nested (semi-)perfect necklaces with the set of all affine necklaces with $m\in\NN$ over $\FF_p$.  
\end{remark}

\section{The lower discrepancy bound. Auxiliary results}\label{sec:4}

In \cite{Lar} the authors showed that the star-discrepancy $D_N^*$ of Levin's \cite{Lev} binary normal number $\lambda$ satisfies $ND^*_N\geq c(\log^2 N)$ with $c>0$ for infinitely many $N$. Levin's $\lambda$ represents the most basic example of a normal number built by the concatenation of affine necklaces for $m=1,2,\ldots$ (see Example \ref{examp:2}). 

In the following theorem we generalize the lower bound for the discrepancy of Levin's normal numbers from base $2$ to arbitrary prime base $p$.  

\begin{theorem}\label{thm:2}
Let $p\in\PP$. Let $\alpha\in[0,1)$ be such that its base $p$ representation is the concatenation of $p$-ary affine necklaces $\mathcal{A}_m$ for $m=1,2,3,\ldots$ with $\boldsymbol{z}=\boldsymbol{\eta}=\boldsymbol{0}$ and $\boldsymbol{u}=\boldsymbol{1}$. Then its star discrepancy $D_N^*$ satisfies 
$$ND^*_N(\{p^n\alpha\})\geq c_p(\log^2 N)$$
for infinitely many $N$, where $c_p>0$ is an absolute constant only depending on $p$.  
\end{theorem}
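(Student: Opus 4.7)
My plan is to adapt the base-$2$ argument of \cite{Lar} to general prime $p$, leveraging the explicit $\FF_p$-linear structure of Levin's coefficients $p^{(\boldsymbol{0},\boldsymbol{1})}_{k,j} = \binom{k+j}{j}\bmod p$. First I would reduce to a contribution coming from a single block $\mathcal{A}_m$. For a large integer $m$, I would pick $N_m \in [n_m, n_{m+1})$ of the special form $N_m = n_m + p^m q_m$ with $q_m \in \{0,1,\dots,p^{p^m}-1\}$ having a prescribed base-$p$ expansion. By Corollary \ref{coro:1}, the star-discrepancy contribution from positions $n<n_m$ is bounded by $O(\log N_m)$, so it suffices to exhibit a single test interval $J\subseteq[0,1)$ on which the counting function restricted to $n\in[n_m,N_m)$ deviates from $(N_m-n_m)\,|J|$ by a quantity of order $p^{2m}$. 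Since $\log N_m \asymp p^m$ when $N_m$ is not too close to $n_m$, this yields the desired $c_p(\log N_m)^2$ bound.

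Second, I would construct the test interval by exploiting Lemma \ref{lem:2gen}. That lemma asserts that for each scale $t\in\{1,\ldots,p^m\}$, the points arising from a complete super-block of length $p^{m+t}$ within $\mathcal{A}_m$ equidistribute exactly among the $p^t$ intervals $[c/p^t,(c+1)/p^t)$, with at most $2p^m$ exceptions caused by boundary-crossing words of length $t$. By choosing $N_m$ so that it terminates inside a partial super-block at each of the $p^m$ scales simultaneously, each scale $t$ contributes an imbalance of size $O(p^m)$ supported on the intervals associated to its partial super-block. I would then pick a single endpoint $a_m\in[0,1)$ with a carefully chosen base-$p$ expansion so that the interval $J=[0,a_m)$ couples all the per-scale imbalances into one counting quantity, with per-scale contributions of magnitude $\asymp p^m$.

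The main obstacle is to show that these per-scale contributions combine coherently rather than cancel. At each scale $t$, the imbalance is described by a submatrix of $P^{(\boldsymbol{0},\boldsymbol{1})}$ whose regularity modulo $p$ is guaranteed by Lemma \ref{lem:rank}; the task is to show that, for Levin's specific binomial coefficients, the sign (or nonzero residue) of each imbalance can be prescribed. The upper anti-diagonal triangular structure from Lemma \ref{lem:rank}(1), together with Lucas' theorem applied to $\binom{k+j}{j}$, should allow the digits of $a_m$ and $q_m$ to be tuned so that all $p^m$ per-scale contributions have the same sign, summing to the required $\Omega(p^{2m})$ deviation. This sign-alignment step is the heart of the argument and is precisely the ingredient that fails for general $\boldsymbol{\eta},\boldsymbol{u}$, which explains why Theorem \ref{thm:2} is stated only for Levin's unshifted, untwisted construction, and anticipates the discussion of Remark \ref{rem:3}.
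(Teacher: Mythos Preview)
Your high-level plan matches the paper's strategy: reduce to one block $\mathcal{A}_m$ via Corollary~\ref{coro:1}, take $N=n_m+p^m q_m$ with prescribed digits, and build a test interval on which the count exceeds the fair share by $\Omega(p^{2m})$. However, the mechanism you sketch for the ``sign-alignment'' step is not the one that works, and the ingredients you name (Lemma~\ref{lem:rank}(1) and Lucas' theorem alone) are not enough to carry it out.

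You propose to \emph{tune} the digits of $a_m$ and $q_m$ so that all per-scale contributions have the same sign. In the actual proof nothing is tuned per $k$: the alignment comes from a structural identity. Lemma~\ref{lem_c} shows that the row immediately below a $t\times t$ block satisfies $c^{(\boldsymbol{0},\boldsymbol{1})}_{k+t,t}\equiv\xi_t\cdot A^{(\boldsymbol{0},\boldsymbol{1})}_{k,t}\pmod p$ with $\xi_t$ \emph{independent of $k$}. This decouples the residue $\gamma$ governing ``which $p$-adic sub-interval does $x_{n,k}$ fall into'' into a $k$-free term plus a $k$-dependent term which, after Lemma~\ref{lem:1a} and Lucas, collapses to $\binom{\lfloor(k+w_l)/p^3\rfloor+1+l}{l}-1$. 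The paper then proves the quantitative Lemma~\ref{lem:1b}: in the relevant $(k,l)$-array $D_m$ this expression equals $p-1$ with density $1-\bigl(\tfrac{p+1}{2p}\bigr)^{m-7}$. That density statement is the real source of the $\Omega(p^{2m})$ surplus; regularity of submatrices gives only uniqueness of solutions, not a lower bound on how many $k$ share a residue.

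A second gap is the shape of the interval and of $N$. The paper does not use a single interval $[0,a_m)$; it constructs $J=J_M\cup\cdots\cup J_0$ as a chain of $M{+}1=p^{m-7}$ adjacent pieces, each of length $\tfrac{p-1}{p}p^{-w_l}$ or $\tfrac{2p-1}{p}p^{-w_l}$, chosen according to whether the common residue $\gamma$ is $p-1$ or not, and placed inside a ``safe zone'' $Z$ that avoids the $O(p^{2m})$ exceptional intervals from Lemma~\ref{lem:2gen}. Correspondingly, the digits of $q_m$ are set at positions $w_l=p^{m-3}-1-p^3 l$ (spaced by $p^3$ and congruent to $p-1$ modulo $p$ so that the last entry of $\xi_{w_l}$ is $p-1$, making the $\gamma/\tilde\gamma$ dichotomy close up). Your outline does not suggest either the $\xi_t$-identity, the $D_m$ density count, the iterative $\gamma$-dichotomy, or the specific choice of the $w_l$; without these, the plan as written does not yield a proof.
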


For the proof of Theorem \ref{thm:2}, that will be given in Section \ref{sec:5}, we collect important auxiliary results on the binomial coefficients and on the matrix $P^{(\boldsymbol{\eta},\boldsymbol{u})}$.

\begin{lemma}\label{lem:1a}
%Let $U=(u_{i,j})_{i,j\geq 0}$ with $u_{i,j}=0$ if $i=0$. For $i\geq 1$ let $u_{i,j}=1$ whenever $j\geq i$ and $u_{i,j}=0$ else. 
%Then $$P\cdot U=\left(\binom{i+1+j}{j}+1\pmod{2}\right)_{i,j\geq 0}.$$
Let $p\in\PP$, $i$ and $u\in\mathbb{N}_0$. Then $\sum_{j=1}^u\binom{i+j}{j}\equiv\binom{i+1+u}{u}-1\pmod{p}$. 
\end{lemma}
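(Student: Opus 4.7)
The plan is to observe that the congruence in question is in fact an integer identity, namely the \emph{hockey stick identity}
\[
\sum_{j=0}^{u}\binom{i+j}{j}=\binom{i+u+1}{u},
\]
and then reduce modulo $p$. Since the statement claims only a congruence, no special care about $p$-adic valuations is needed; the identity holds in $\mathbb{Z}$ and hence in $\mathbb{F}_p$.

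First I would prove the hockey stick identity by induction on $u$. The base case $u=0$ is the trivial equality $\binom{i}{0}=1=\binom{i+1}{0}$. For the inductive step, assuming $\sum_{j=0}^{u-1}\binom{i+j}{j}=\binom{i+u}{u-1}$, I would add $\binom{i+u}{u}$ to both sides and apply Pascal's rule $\binom{i+u}{u-1}+\binom{i+u}{u}=\binom{i+u+1}{u}$ to conclude.

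Second, since the $j=0$ term on the left is $\binom{i}{0}=1$, separating it yields
\[
\sum_{j=1}^{u}\binom{i+j}{j}=\binom{i+u+1}{u}-1,
\]
which upon reduction modulo $p$ gives the claim. There is no genuine obstacle here: the result is an elementary binomial identity and the prime $p$ plays no role beyond the final reduction. The only reason to state it modulo $p$ is that it will be applied inside manipulations of the entries of $P^{(\boldsymbol{\eta},\boldsymbol{u})}\pmod{p}$ in the proof of Theorem \ref{thm:2}.
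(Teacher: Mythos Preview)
Your proof is correct and follows essentially the same approach as the paper: the paper's proof simply notes that the claim is an easy consequence of the integer identity $\sum_{j=0}^{u}\binom{i+j}{j}=\binom{i+1+u}{u}$, which can be shown by induction on $u$. Your additional remark that the prime $p$ plays no role beyond the final reduction is accurate.
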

\begin{proof}
This is an easy consequence of $\sum_{j=0}^u\binom{i+j}{j}=\binom{i+1+u}{u}$, which can be shown by induction on $u$. 
\end{proof}
We will heavily use the well-known Lucas Theorem, which we formulate in the following lemma.  
\begin{lemma}\label{lem:Lucas_Thm}
Let $p\in\PP$ and $m,n\in\NN_0$ with unique base $p$ representations $m=m_0+m_1p+m_2p^2+\cdots$ and $n=n_0+n_1p+n_2p^2+\cdots$. Then 
$$\binom{m}{n}\equiv \prod_{i=0}^\infty \binom{m_i}{n_i} \pmod{p}.$$
\end{lemma}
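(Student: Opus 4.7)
The plan is to prove Lucas' Theorem by a generating-function computation in the polynomial ring $\FF_p[x]$, exploiting the Frobenius endomorphism. First I would recall the Freshman's Dream: since $p\in\PP$, the binomial coefficients $\binom{p}{k}$ with $1\leq k\leq p-1$ are divisible by $p$, so $(a+b)^p\equiv a^p+b^p\pmod{p}$. Iterating this identity $k$ times gives the key congruence
\[
(1+x)^{p^k}\equiv 1+x^{p^k}\pmod{p}
\]
as an identity in $\FF_p[x]$.

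Next, writing $m$ in its unique base $p$ expansion $m=m_0+m_1 p+m_2 p^2+\cdots$ with $m_k\in\{0,1,\ldots,p-1\}$, I would factor
\[
(1+x)^m=\prod_{k\geq 0}\bigl((1+x)^{p^k}\bigr)^{m_k}\equiv \prod_{k\geq 0}(1+x^{p^k})^{m_k}\pmod{p},
\]
and then expand each factor with the ordinary binomial theorem to obtain
\[
(1+x)^m\equiv \prod_{k\geq 0}\sum_{j_k=0}^{m_k}\binom{m_k}{j_k}x^{j_k p^k}\pmod{p}.
\]

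The final step is to extract the coefficient of $x^n$ on both sides. On the left side this coefficient is $\binom{m}{n}\bmod p$. On the right side, a typical monomial in the product is $\bigl(\prod_k \binom{m_k}{j_k}\bigr)\, x^{\sum_k j_k p^k}$ with $0\leq j_k\leq m_k\leq p-1$. Because every $j_k$ lies in $\{0,1,\ldots,p-1\}$, distinct tuples $(j_k)_{k\geq 0}$ yield distinct exponents, so by the uniqueness of the base $p$ expansion of $n=n_0+n_1 p+n_2 p^2+\cdots$ the only contribution to the coefficient of $x^n$ comes from the choice $j_k=n_k$ for every $k$. This gives $\binom{m}{n}\equiv \prod_{k\geq 0}\binom{m_k}{n_k}\pmod{p}$, as required.

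The one point that requires a small care is the case where some $n_k>m_k$: then the tuple $j_k=n_k$ is not admissible in the expansion on the right, which makes the coefficient of $x^n$ equal to $0$ modulo $p$; this matches the convention $\binom{m_k}{n_k}=0$ whenever $n_k>m_k$, so the formula still holds. Apart from this bookkeeping, there is no serious obstacle: once the Frobenius identity and the uniqueness of base $p$ digits are in hand, the proof is a direct coefficient comparison.
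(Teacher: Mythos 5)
Your argument is correct and complete: the Frobenius identity $(1+x)^{p^k}\equiv 1+x^{p^k}\pmod{p}$, the factorization of $(1+x)^m$ along the base $p$ digits of $m$, and the coefficient extraction via uniqueness of base $p$ expansions (including the degenerate case $n_k>m_k$) constitute the standard generating-function proof of Lucas' Theorem. Note that the paper itself offers no proof of this lemma --- it is stated as a well-known classical fact and used as a black box --- so there is nothing to compare against; your write-up would serve as a self-contained justification if one were desired.
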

Lemma~\ref{lem:Lucas_Thm} explains a self-similar structure in $$P^{(\boldsymbol{0},\boldsymbol{1})}=\left(\binom{i+j}{j}\pmod{p}\right)_{i,j\geq 0}=:(p_{i,j})_{i,j\geq 0},$$ as $p_{pi,pj}=p_{i,j}$ or $p_{pi+i_0,pj+i_0}\equiv p_{i,j}p_{i_0,j_0}\pmod{p}$ where $i_0,j_0\in\{0,1,\ldots,p-1\}$. 

This self-similar structure guarantees a number of $0$ elements in specific square submatrices in $P^{(\boldsymbol{0},\boldsymbol{1})}$ that is quantified in Lemma \ref{lem:1b}, which will be fitting in the proof of Theorem \ref{thm:2} in the subsequent section. 

\begin{lemma}\label{lem:1b}
Let $p\in\PP$, $m>7$, and
$$D_m:=\left(\binom{i+1+j}{j}-1\pmod{p}\right)_{i,j}$$
with $p^{m-3}-p(p^3-1) p^{m-7}\leq i< p^{m-3}$ and $ 0\leq j< p^{m-7}$
The relative number of $(p-1)$s in $D_m$ is $\left(1-\left(\frac{p+1}{2p}\right)^{m-7}\right)$.
\end{lemma}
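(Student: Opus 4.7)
The plan is to count the entries of $D_m$ equal to $p-1$, i.e., pairs $(i,j)$ in the given range for which $\binom{i+1+j}{j}\equiv 0\pmod p$. I find it more convenient to count the complement and subtract at the end.

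By Lucas's Theorem (Lemma \ref{lem:Lucas_Thm}), writing $a:=i+1$, the nonvanishing condition $\binom{a+j}{j}\not\equiv 0\pmod p$ reads $j_k\leq(a+j)_k$ for every digit index $k$. A short argument (examine the smallest $k$ where $a_k+j_k\geq p$, at which position a carry is created and forces $(a+j)_k=a_k+j_k-p<j_k$) shows this is equivalent to the no-carry condition $a_k+j_k\leq p-1$ for every $k$. Since $j<p^{m-7}$, one has $j_k=0$ for $k\geq m-7$, so the constraint reduces to
$$a_k+j_k\leq p-1\qquad\text{for }k=0,1,\ldots,m-8.$$
For fixed $a$, this leaves exactly $\prod_{k=0}^{m-8}(p-a_k)$ admissible values of $j$.

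The central task is therefore to sum $\prod_{k=0}^{m-8}(p-a_k)$ over $a=i+1\in[p^{m-6}+1,p^{m-3}]$, where the lower endpoint is read off from $p^{m-3}-p(p^3-1)p^{m-7}=p^{m-6}$. The cleanest route is to evaluate the sum over full boxes: the digits of $a\in[0,p^M)$ are independent and uniform over $\{0,\ldots,p-1\}$, and using $\sum_{d=0}^{p-1}(p-d)=\frac{p(p+1)}{2}$ one obtains, for every $M\geq m-7$,
$$\sum_{a=0}^{p^M-1}\prod_{k=0}^{m-8}(p-a_k)=\Bigl(\tfrac{p(p+1)}{2}\Bigr)^{m-7}p^{M-(m-7)}.$$
Applying this with $M=m-3$ and $M=m-6$ yields the sums over $[0,p^{m-3})$ and $[0,p^{m-6})$. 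The two single-point contributions at the endpoints $a=p^{m-6}$ and $a=p^{m-3}$ both evaluate to $p^{m-7}$ (in either case $a_0=\dots=a_{m-8}=0$), so they cancel exactly when one passes from $[p^{m-6},p^{m-3})$ to $[p^{m-6}+1,p^{m-3}]$. Hence the number of pairs with $\binom{i+1+j}{j}\not\equiv 0\pmod p$ equals
$$\Bigl(\tfrac{p(p+1)}{2}\Bigr)^{m-7}(p^4-p)=\Bigl(\tfrac{p(p+1)}{2}\Bigr)^{m-7}p(p^3-1).$$

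Finally I would divide by the total number of pairs, $(p^{m-3}-p^{m-6})\,p^{m-7}=(p^3-1)p^{2m-13}$, and simplify: the factors $p^3-1$ cancel, the remaining powers of $p$ collapse, and the quotient reduces to $\bigl(\tfrac{p+1}{2p}\bigr)^{m-7}$. Subtracting from $1$ gives the claimed relative frequency $1-\bigl(\tfrac{p+1}{2p}\bigr)^{m-7}$ of entries equal to $p-1$ in $D_m$. The only delicate point in the whole argument is the endpoint bookkeeping at $a=p^{m-6}$ and $a=p^{m-3}$; the symmetric cancellation above makes this painless, but it is the step where an arithmetic slip would most naturally occur.
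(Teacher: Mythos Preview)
Your argument is correct. The no-carry characterisation of $\binom{a+j}{j}\not\equiv 0\pmod p$ (Kummer's theorem, which you derive ad hoc from Lucas) is sound, the digit-product count $\prod_{k=0}^{m-8}(p-a_k)$ is right because $j$ has only $m-7$ base-$p$ digits, and the boxed-sum computation with the endpoint swap $a=p^{m-6}\leftrightarrow a=p^{m-3}$ is clean.

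The paper reaches the same count by a different route. Rather than summing a digit product over the shifted range $[p^{m-6}+1,p^{m-3}]$, it first argues structurally that the number of zeros in $\bigl(\binom{i+1+j}{j}\bigr)_{0\le i,j<p^r}$ equals the number of zeros in the Pascal block $\bigl(\binom{j}{i}\bigr)_{0\le i,j<p^r}$, via two Lucas-based row/column shifts; the latter count is $p^{2r}-(p(p+1)/2)^r$ by the standard self-similarity of Pascal's triangle modulo~$p$. It then observes that $D_m+1$ is literally $p(p^3-1)$ vertically stacked copies of that $p^{m-7}\times p^{m-7}$ block, which avoids any endpoint bookkeeping. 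Your approach is more directly computational and handles the shift $a=i+1$ arithmetically rather than geometrically; the paper's approach makes the periodic block structure of $D_m$ explicit, which is what motivates the accompanying figures and is reused conceptually elsewhere. Either works equally well here.
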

\begin{proof}
First we see as a consequence of Lucas' Theorem, that $\binom{p^r-1+1+j}{j}\equiv\binom{1}{0}\binom{j}{j}\pmod{p}=1=\binom{0+j}{j}$ for $0\leq j<p^r$. Hence the number of $0$s in 
\begin{equation}\label{eq:submat1}
\left(\binom{i+1+j}{j}\pmod{p}\right)_{0\leq i<p^r,0\leq j<p^r}
\end{equation}

equals the number of $0$s in 
\begin{equation}\label{eq:submat2}
\left(\binom{i+j}{j}\pmod{p}\right)_{0\leq i<p^r,0\leq j<p^r}=\left(\binom{i+j}{i}\pmod{p}\right)_{0\leq i<p^r,0\leq j<p^r},
\end{equation}
or equivalently the number of $0$s in

$$\left(\binom{j}{i}\pmod{p}\right)_{0\leq i<p^r,i\leq j<p^r+i}.$$

Since $\binom{p^r+l}{i}\equiv\binom{l}{i}\pmod{p}$ for all $0\leq l,i <p^r$ the latter equals the number of $0$s in
\begin{equation}\label{eq:submat3}
\left(\binom{j}{i}\pmod{p}\right)_{0\leq i<p^r,0\leq j<p^r}.
\end{equation}
See Figure \ref{fig:submatrices} for an illustration of the submatrices for $p=2$ and $r=3$ and for $p=3$ and $r=2$. 

\begin{figure}[h]
	\centering
			\includegraphics[width=0.12\textwidth]{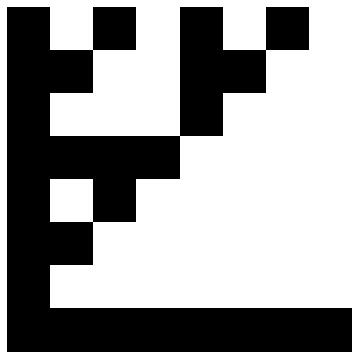}\hspace{0.9cm}	\includegraphics[width=0.12\textwidth]{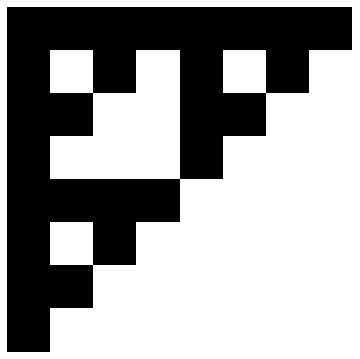}\hspace{0.9cm}	\includegraphics[width=0.12\textwidth]{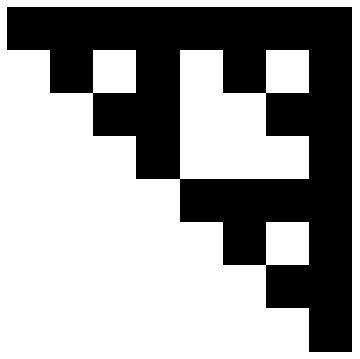}\\
		\includegraphics[width=0.12\textwidth]{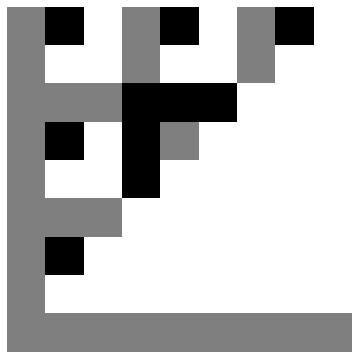}\hspace{0.9cm}	\includegraphics[width=0.12\textwidth]{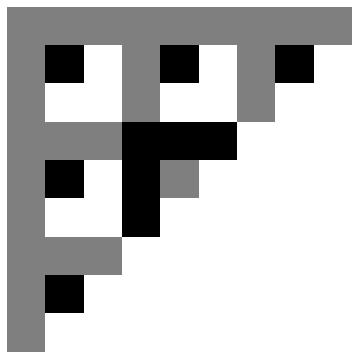}\hspace{0.9cm}	\includegraphics[width=0.12\textwidth]{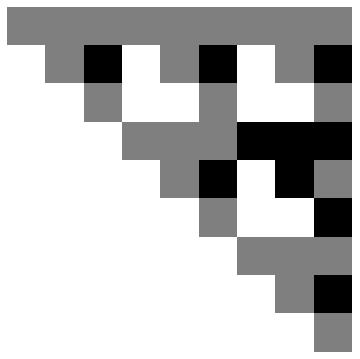}
	\caption{The matrices in \eqref{eq:submat1}, \eqref{eq:submat2}, and \eqref{eq:submat3} for $p=2$ and $r=3$ (top).
	The matrices in \eqref{eq:submat1}, \eqref{eq:submat2}, and \eqref{eq:submat3} for $p=3$ and $r=2$ (bottom). }
	\label{fig:submatrices}
\end{figure}

The number of nonzero elements in the latter matrix can be computed as $(\frac{p(p+1)}{2})^r$. This is an easy consequence of Lucas Theorem and the fact that the number of nonzero elements in the upper left $p\times p$ submatrix is $\sum_{i=1}^{p}i=\frac{p(p+1)}{2}$. Note, $\binom{j}{i}\not\equiv0\pmod{p}$ if $p>j\geq i\geq 0$ and $\binom{j}{i}\equiv0\pmod{p}$ else. 

The statement of the lemma follows then again by the self-similar structure of $D_m+1\pmod{p}$, that is built by a submatrix with $r=m-7$, as described above in \eqref{eq:submat1}, stacked $p(p^3-1)$ times. For the selfsimilar structure note $\binom{i+a p^r+j}{i+ap^r}\equiv\binom{i+j}{j}\pmod{p}$ for $0\leq i+j<p^r$ and $a\in\ZZ$ and $\binom{i+a p^r+j}{i+ap^r}\equiv\binom{i+j-p^r}{j}\binom{a+1}{a}\equiv0\pmod{p}$ for $p^r\leq i+j<2p^r$ as then obviously $i+j-p^r<i$. See Figure \ref{fig:D_m} for the transpose of $D_m+1\pmod{p}$ for $m=10$ and $p=2$ and for $m=9$ and $p=3$.
\begin{figure}[h]
	\centering
		\includegraphics[width=0.99\textwidth]{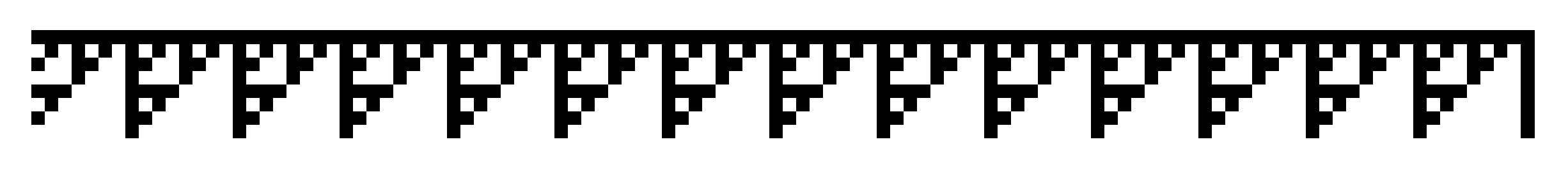}\\
		\includegraphics[width=0.99\textwidth]{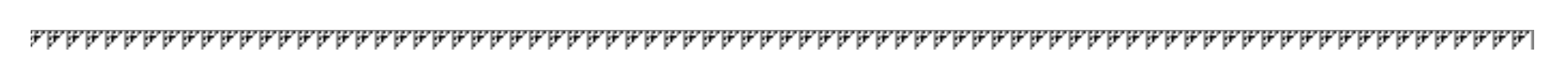}
	\caption{The transpose of $D_{10}+1\pmod{p}$ with $p=2$ (top) and $D_{9}+1\pmod{p}$ with $p=3$ (bottom). }
	\label{fig:D_m}
\end{figure}

\end{proof}

For stating further specific properties in $P^{(\boldsymbol{\eta},\boldsymbol{u})}$, which are relevant for the distribution of $(\{p^n\alpha\})_{n=0,1,\ldots}$, we introduce a notation for specific submatrices that are sketched in Figure \ref{fig:submatrices}.

Let $t\in\NN$ and $k\in\NN_0$ such that $t+k<p^m$. Set
$$A^{(\boldsymbol{\eta},\boldsymbol{u})}_{k,t}:=\left(\binom{i+j-\eta_j}{j}u_j\right)_{i=k,\ldots,k+t-1,\,j=0,\ldots, t-1},$$ 
i.e. the $t\times t$ submatrix of $P^{(\boldsymbol{\eta},\boldsymbol{u})}$ that selects the first $t$ columns of the $k$th up to the $k$th plus $(t-1)$th rows of $P^{(\boldsymbol{\eta},\boldsymbol{u})}$. 
Furthermore, set
$$B^{(\boldsymbol{\eta},\boldsymbol{u})}_{k,t}=\left(\binom{i+j-\eta_j}{j}u_j\right)_{i=k,\ldots,k+t-1,\,j=t,\ldots,p^m-1},$$ 
i.e. the $t\times (p^m-t)$ submatrix in $P^{(\boldsymbol{\eta},\boldsymbol{u})}$ that is located just to the right of $A^{(\boldsymbol{\eta},\boldsymbol{u})}_{k,t}$. \\
Let $t\in\NN$ and $k\in\NN_0$ such that $t+k<p^m$, let 
$$c^{(\boldsymbol{\eta},\boldsymbol{u})}_{k+t,t}:=\left(\binom{k+t+j-\eta_j}{j}u_j\right)_{j=0,\ldots, t-1}\quad\mbox{and}\quad d^{(\boldsymbol{\eta},\boldsymbol{u})}_{k+t,t}:=\left(\binom{k+t+j-\eta_j}{j}u_j\right)_{j=t,\ldots, p^m-1}.$$

\begin{figure}[h]
	\centering
		\includegraphics[width=0.90\textwidth]{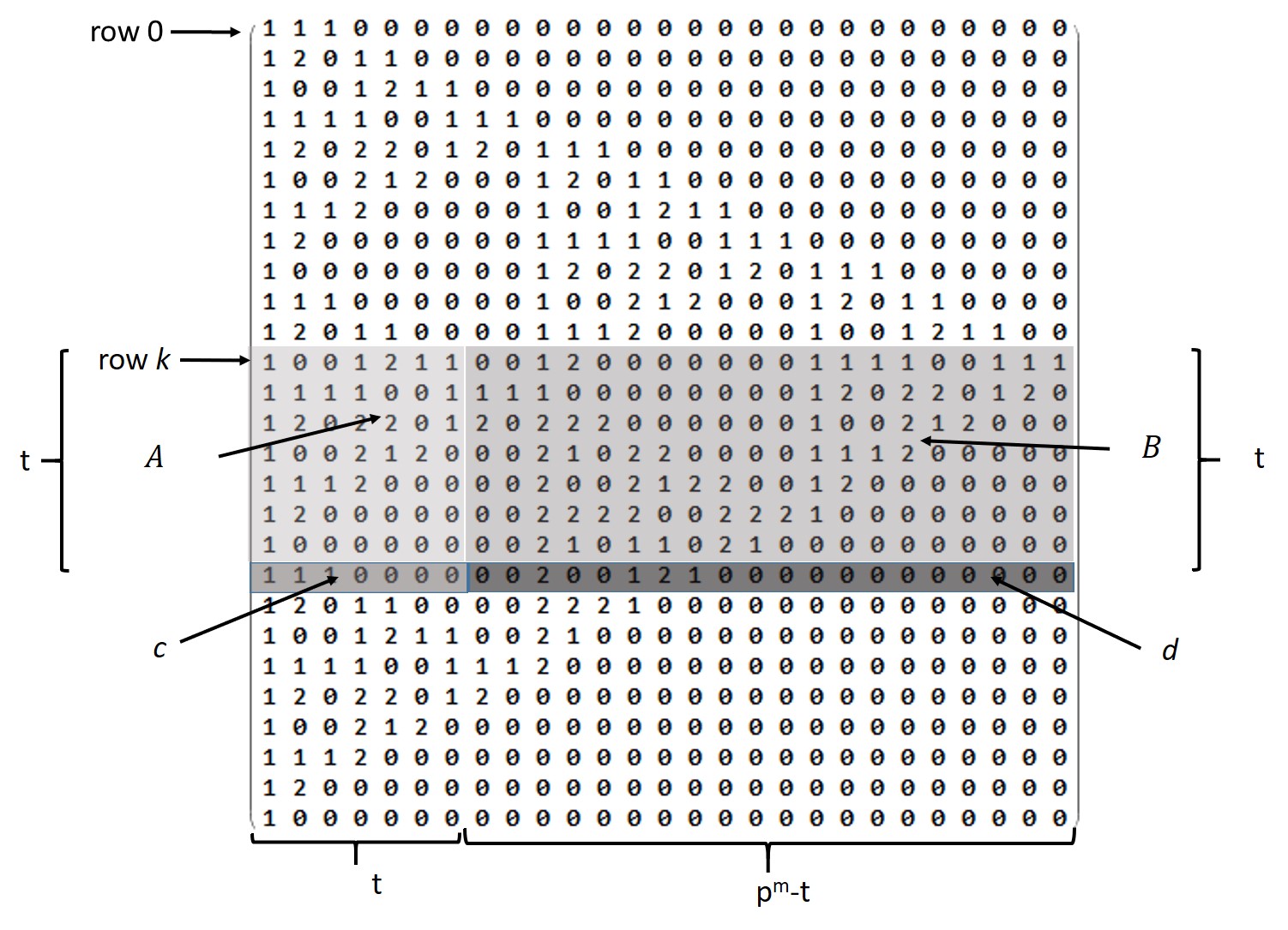}
	\caption{The submatrices $A^{(\boldsymbol{\eta},\boldsymbol{u})}_{k,t}=:A$, $B^{(\boldsymbol{\eta},\boldsymbol{u})}_{k,t}=:B$, $c^{(\boldsymbol{\eta},\boldsymbol{u})}_{k+t,t}=:c$, and $d^{(\boldsymbol{\eta},\boldsymbol{u})}_{k+t,t}=:d$, for $p=3$, $m=3$, $\eta_j=\lfloor \frac{j}{\sqrt{5}}\rfloor$, $u_j=1$, $t=7$, and $k=11$.}
	\label{fig:ABcd}
\end{figure}

For $t\in\NN$ we define $\xi_t := \left((-1)^{t+1}\binom{t}{0}, (-1)^{t+2}\binom{t}{1}, \ldots, (-1)^{2t}\binom{t}{t-1}\right)$. % and $$\mathcal{M}_t := \begin{pmatrix} 0 & 1 & 0 & \ldots & \ldots & 0 \\ 0 & 0 & 1 & 0 & \ldots & 0 \\ \vdots & & & & & \vdots \\ 0 & 0 & 0 & \ldots & 0 & 1 \\ & & & \xi_t & & \end{pmatrix}.$$ 
Then we immediately can extend the statements of \cite[Lemma~6, Lemma~7, and Corollary~1]{Lar}. Although, Theorem \ref{thm:2} only addresses the case $\boldsymbol{\eta}=\boldsymbol{0}$ and $\boldsymbol{u}=\boldsymbol{1}$ we put here these extended statements for general $\boldsymbol{\eta}$ and $\boldsymbol{u}$ to be able to discuss possible generalizations of Theorem \ref{thm:2}. See Remark \ref{rem:3}. 

\begin{lemma} \label{lem_c}Let $p\in\PP$. For $k,t$ such that $0\leq k<p^m$, $1\leq t\leq p^m$ and $k+t\leq p^m-1$ we have
\begin{enumerate}
	\item $
c^{(\boldsymbol{\eta},\boldsymbol{u})}_{k+t,t} \equiv \xi_t\cdot A^{(\boldsymbol{\eta},\boldsymbol{u})}_{k,t}\pmod{p},
$
\item $
d^{(\boldsymbol{\eta},\boldsymbol{u})}_{k+t,t} \equiv \xi_t\cdot B^{(\boldsymbol{\eta},\boldsymbol{u})}_{k,t}+c^{(\boldsymbol{\eta}^{(t)},\boldsymbol{u}^{(t)})}_{k+t,p^m-t}\pmod{p},
$ where $\boldsymbol{\eta}^{(t)}=(\eta_t,\eta_{t+1},\eta_{t+2}, \cdots)$ and $\boldsymbol{u}^{(t)}=(u_t,u_{t+1},u_{t+2},\ldots)$. 
\end{enumerate}

\end{lemma}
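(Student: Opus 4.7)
The lemma is really a single piece of finite--difference calculus applied column by column. The key observation is that, for each fixed column index $j$, the $(i,j)$-entry of $A^{(\boldsymbol{\eta},\boldsymbol{u})}_{k,t}$ or $B^{(\boldsymbol{\eta},\boldsymbol{u})}_{k,t}$ is $u_j\binom{k+i+j-\eta_j}{j}$, i.e., the value at $x=k+i$ of the polynomial $f_j(x):=u_j\binom{x+j-\eta_j}{j}$ of degree exactly $j$ in $x$. The vector $\xi_t$, in turn, encodes (up to a sign and a single boundary term at $i=t$) the $t$-th forward--difference operator $\Delta^{t}$, where $\Delta f(x):=f(x+1)-f(x)$. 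So the lemma will follow at once from standard identities for $\Delta^t$ on binomial polynomials.

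First I would record the two elementary ingredients: the expansion
$$\Delta^{t}f(x)=\sum_{i=0}^{t}(-1)^{t-i}\binom{t}{i}f(x+i),$$
and the identity $\Delta^{t}\binom{x}{r}=\binom{x}{r-t}$ (with $\binom{x}{r-t}:=0$ for $r<t$), obtained by iterating Pascal's rule. Then I would adjoin the missing $i=t$ term $(-1)^{2t+1}\binom{t}{t}=-1$ to the definition of $\xi_t$ so that, for every admissible column index $j$,
$$(\xi_{t}\cdot M)_{j}\;=\;f_{j}(k+t)\;-\;\Delta^{t}f_{j}(k),$$
where $M$ stands for $A^{(\boldsymbol{\eta},\boldsymbol{u})}_{k,t}$ or $B^{(\boldsymbol{\eta},\boldsymbol{u})}_{k,t}$. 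The equation of coefficients $(-1)^{t+1+i}=-(-1)^{t-i}$ is the only sign bookkeeping to check, and the identity is valid over $\mathbb{Z}$ (so certainly modulo $p$).

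For statement (1) the column index runs through $j=0,\ldots,t-1$, so $\deg f_{j}=j<t$ and hence $\Delta^{t}f_{j}\equiv 0$. The displayed identity collapses to $(\xi_{t}\cdot A^{(\boldsymbol{\eta},\boldsymbol{u})}_{k,t})_{j}=f_{j}(k+t)=u_{j}\binom{k+t+j-\eta_{j}}{j}$, which is exactly the $j$-th entry of $c^{(\boldsymbol{\eta},\boldsymbol{u})}_{k+t,t}$. For statement (2) the column index runs through $j=t,\ldots,p^{m}-1$, so the identity for $\Delta^{t}$ on $\binom{\cdot}{j}$ applies and yields $\Delta^{t}f_{j}(k)=u_{j}\binom{k+j-\eta_{j}}{j-t}$. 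The displayed identity then reads
$$(\xi_{t}\cdot B^{(\boldsymbol{\eta},\boldsymbol{u})}_{k,t})_{j}\;=\;u_{j}\binom{k+t+j-\eta_{j}}{j}-u_{j}\binom{k+j-\eta_{j}}{j-t}.$$
The first summand is the $j$-th component of $d^{(\boldsymbol{\eta},\boldsymbol{u})}_{k+t,t}$. For the second, relabel $j'=j-t$ and use $\eta^{(t)}_{j'}=\eta_{j'+t}=\eta_{j}$ and $u^{(t)}_{j'}=u_{j}$: the summand becomes $u^{(t)}_{j'}\binom{(k+t)+j'-\eta^{(t)}_{j'}}{j'}$, which is precisely the $j'$-th component of $c^{(\boldsymbol{\eta}^{(t)},\boldsymbol{u}^{(t)})}_{k+t,p^{m}-t}$. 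Rearranging gives (2).

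No real obstacle arises; the argument is verification rather than discovery. The only places requiring some care are (a) tracking the sign $(-1)^{t+1+i}$ against the difference-operator sign $(-1)^{t-i}$, and (b) the index bijection $j\mapsto j-t$ between the columns of $B^{(\boldsymbol{\eta},\boldsymbol{u})}_{k,t}$ indexed by $\{t,\ldots,p^{m}-1\}$ and the entries of $c^{(\boldsymbol{\eta}^{(t)},\boldsymbol{u}^{(t)})}_{k+t,p^{m}-t}$ indexed by $\{0,\ldots,p^{m}-t-1\}$.
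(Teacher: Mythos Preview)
Your proof is correct and follows essentially the same route as the paper. The paper isolates the binomial identity
\[
\sum_{i=0}^{t}(-1)^{t+i+1}\binom{t}{i}\binom{k+j+i-\eta_j}{j}=-\binom{k+j-\eta_j}{j-t}
\]
as a separate lemma (proved by induction on $t$) and then reads off Items (1) and (2) from the cases $j<t$ and $j\ge t$; your argument is the same computation, only packaged through the forward-difference operator $\Delta^{t}$ and the standard fact $\Delta^{t}\binom{x}{r}=\binom{x}{r-t}$, which makes the sign bookkeeping and the vanishing for $j<t$ more transparent.
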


For the proof of Lemma~\ref{lem_c} we will need the following identity.
\begin{lemma} \label{lem_d}
For all non-negative integers $t, k$ we have
\begin{equation*}%\label{eq:binomial:p}
\sum_{i=\max(\eta_j-k,0)}^t (-1)^{t+i+1}\binom{t}{i} \cdot \binom{k+j+i-\eta_j}{j} = -\binom{k+j-\eta_j}{j-t}, 
\end{equation*}
\end{lemma}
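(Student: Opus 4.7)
The plan is to recognize the sum as a finite difference of binomial coefficients. My first step would be to reduce the slightly awkward lower limit $\max(\eta_j-k,0)$ to $0$. The constraints $\eta_0=0$ and $\eta_{j+1}\leq\eta_j+1$ imposed on $\boldsymbol{\eta}$ in Section \ref{sec:3} give $\eta_j\leq j$ by induction on $j$, so for every $i$ in the (possibly empty) range $0\leq i<\eta_j-k$ one has $k+j+i-\eta_j<j$, forcing $\binom{k+j+i-\eta_j}{j}=0$. Hence those terms contribute nothing and the claim is equivalent to
$$\sum_{i=0}^{t}(-1)^{t+i+1}\binom{t}{i}\binom{r+i}{j}=-\binom{r}{j-t},\qquad r:=k+j-\eta_j.$$

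Next I would invoke the standard forward-difference identity
$$\sum_{i=0}^{t}(-1)^{i}\binom{t}{i}f(i)=(-1)^{t}(\Delta^{t}f)(0),\qquad \Delta f(i):=f(i+1)-f(i),$$
applied to $f(i):=\binom{r+i}{j}$. Pascal's rule in the form $\binom{r+i+1}{j}-\binom{r+i}{j}=\binom{r+i}{j-1}$ gives $(\Delta f)(i)=\binom{r+i}{j-1}$, and iterating $t$ times yields $(\Delta^{t}f)(0)=\binom{r}{j-t}$, with the standard conventions that $\binom{n}{k}=0$ for negative $k$ and for $0\leq n<k$. Multiplying through by $(-1)^{t+1}$ then produces exactly $-\binom{r}{j-t}$ on the left-hand side, as required.

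Should one prefer a fully self-contained argument, the reduced identity can equally well be proved by a short induction on $t$: the base case $t=0$ is a tautology, and the step follows by writing $\binom{t+1}{i}=\binom{t}{i}+\binom{t}{i-1}$, splitting the sum, reindexing, and once more applying Pascal. I do not foresee any real obstacle; the only structural input is the trivial bound $\eta_j\leq j$ used to discard the first few terms, and the remainder is a one-line manipulation of binomial coefficients. The only point worth verifying carefully is the degenerate case $t>j$, where both sides vanish (the right-hand side by convention, the left-hand side because $\Delta^{t}f\equiv 0$ once $t$ exceeds the degree $j$ of $f$), so the identity remains valid there as well.
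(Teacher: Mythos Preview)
Your argument is correct and is essentially the paper's proof in a cleaner wrapping: the paper simply says ``induction on $t$ and on $r=\max(\eta_j-k,0)$'' together with Pascal's rule and the vanishing of $\binom{n}{j}$ for $0\le n<j$, which is exactly what your finite-difference computation and your reduction of the lower limit to $0$ encode. Your use of the constraint $\eta_j\le j$ to kill the terms $0\le i<\eta_j-k$ in one stroke replaces the paper's induction on $r$, and your forward-difference identity $(\Delta^t f)(0)=\binom{r}{j-t}$ is precisely the closed form of the induction on $t$; the alternative inductive proof you sketch at the end matches the paper verbatim.
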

\begin{proof}
This is simple induction on $t$ and on $r=\max(\eta_j-k,0)$ together with the basic facts that $\binom{t}{i}=\binom{t-1}{i}+\binom{t-1}{i-1}$, therefore $\binom{n}{k}-\binom{n-1}{k}=\binom{n-1}{k-1}$, $\binom{k}{j}=0$ whenever $k<j$ or $j\leq 0$. 
\end{proof}
From Lemma~\ref{lem_d}, we immediately conclude: 
\begin{corollary} \label{cor_b}
~\
\begin{enumerate}
\item [(a)] $\sum_{i=\max(\eta_j-k,0)}^t (-1)^{t+i+1}\binom{t}{i} \cdot \binom{k+j+i-\eta_j}{j} = 0$ for $j = 0,1,\ldots, t-1$,
\item [(b)] $\sum_{i=\max(\eta_j-k,0)}^{t-1} (-1)^{t+i+1}\binom{t}{i} \cdot \binom{k+j+i-\eta_j}{j} =  \binom{k+j+t-\eta_j}{j}$ for $j=0,1,\ldots, t-1$.
\item [(c)] $\sum_{i=\max(\eta_j-k,0)}^{t-1} (-1)^{t+i+1}\binom{t}{i} \cdot \binom{k+j+i-\eta_j}{j} =  \binom{k+j+t-\eta_j}{j}-\binom{k+j-\eta_j}{j-t}$ \newline 
for $j=t,t+1,\ldots, p^m-1$.
\end{enumerate}

\end{corollary}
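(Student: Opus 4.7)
The plan is to read off all three identities directly from Lemma \ref{lem_d}, since they are just the statement of Lemma \ref{lem_d} specialized to a particular range of $j$, or the same statement with the $i=t$ summand pulled off. The only convention I need is the one already used in the proof of Lemma \ref{lem_d}: $\binom{n}{k}=0$ whenever $k<0$ or $n<k$.

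For part (a), I would observe that for $j\in\{0,1,\ldots,t-1\}$ the right-hand side of Lemma \ref{lem_d} is $-\binom{k+j-\eta_j}{j-t}$, and here $j-t<0$, so this binomial coefficient vanishes. Hence the sum is $0$, which is exactly (a).

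For parts (b) and (c), I would split off the $i=t$ term from the sum in Lemma \ref{lem_d}. That term equals
\[
(-1)^{2t+1}\binom{t}{t}\binom{k+j+t-\eta_j}{j} \;=\; -\binom{k+j+t-\eta_j}{j}.
\]
So the truncated sum (up to $i=t-1$) equals the full sum (up to $i=t$) minus this term, i.e.\ it equals the right-hand side of Lemma \ref{lem_d} plus $\binom{k+j+t-\eta_j}{j}$. For $j\in\{0,\ldots,t-1\}$ the right-hand side is $0$ by the argument used for (a), giving (b); for $j\in\{t,\ldots,p^m-1\}$ the right-hand side is $-\binom{k+j-\eta_j}{j-t}$, giving (c).

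There is no real obstacle here beyond bookkeeping: one has to check that the lower index $\max(\eta_j-k,0)$ of summation is the same in the corollary as in Lemma \ref{lem_d} so that the subtraction of the $i=t$ term is legitimate (and that $\max(\eta_j-k,0)\leq t-1$ in the cases where the truncated sum is to be nonempty; otherwise both the untruncated sum and the truncation agree trivially since the $i=t$ term would be absent from the relevant range, but then the claimed equalities reduce to instances of (a) and the direct evaluation of $\binom{k+j+t-\eta_j}{j}$, both of which continue to hold). Thus (a)--(c) follow directly from Lemma \ref{lem_d} by separating the top index of summation.
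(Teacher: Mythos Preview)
Your proposal is correct and matches the paper's own approach: the corollary is stated right after Lemma~\ref{lem_d} with the phrase ``we immediately conclude,'' and your derivation---specializing $j$ for (a) and separating off the $i=t$ summand for (b) and (c)---is precisely the intended reading of that immediacy.
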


\emph{Proof of Lemma \ref{lem_c}}. We start with Item (1): $
c^{(\boldsymbol{\eta},\boldsymbol{u})}_{k+t,t} \equiv \xi_t\cdot A^{(\boldsymbol{\eta},\boldsymbol{u})}_{k,t}\pmod{p}$. This is equivalent to $\sum_{i=\max(\eta_j-k,0)}^{t-1} (-1)^{t+i+1}\binom{t}{i} \cdot \binom{k+j+i-\eta_j}{j} =  \binom{k+j+t-\eta_j}{j}$ for $j=0,1,\ldots, t-1$. This is exactly Corollary \ref{cor_b}, Item (b). 

For the proof of Item (2), i.e. $
d^{(\boldsymbol{\eta},\boldsymbol{u})}_{k+t,t} \equiv \xi_t\cdot B^{(\boldsymbol{\eta},\boldsymbol{u})}_{k,t}+c^{(\boldsymbol{\eta}^{(t)},\boldsymbol{u}^{(t)})}_{k+t,2^m-t}\pmod{p},
$ we see the equivalence to Item (c) in Corollary \ref{cor_b}: $\sum_{i=\max(\eta_j-k,0)}^{t-1} (-1)^{t+i+1}\binom{t}{i} \cdot \binom{k+j+i-\eta_j}{j} =  \binom{k+j+t-\eta_j}{j}-\binom{k+j-\eta_j}{j-t}$ for $j=t,t+1,\ldots, p^m-1$.
\hfill$\qed$

%$P^{(\eta)}_u$ for any fixed sequence $\boldsymbol{\eta}\in\NN_0^{\NN_0}$ with $\eta_0=0$ and $\eta_{i-1}\leq \eta_i\leq \eta_{i-1}+1$ for $i\in\NN$, and 
%$$P=\left(\binom{i+j-\eta_j}{j}u^{j}\right)_{i\geq 0,j\geq 0}.$$

\section{Proof of Theorem \ref{thm:2} and Remark \ref{rem:3}}\label{sec:5}

Now we are ready to prove Theorem \ref{thm:2} and  to discuss its generalisability in Remark \ref{rem:3}. \\

\emph{Proof of Theorem \ref{thm:2}}. 

We make use of the notation $n_m = \sum_{j=1}^{m-1} p^j p^{p^j}$ for $m\in\NN$. For every $m$ large enough, we will show that there exists an $N$ with  $n_m < N < n_{m-1}$ and an interval $J:=J_m \subseteq [0,1)$ such that 
\begin{equation}\label{eq:tomany}
\# \left\{0 \leq n < N \left| ~\left\{p^n \alpha\right\} \in J\right\} \right. \geq N \cdot \lambda (J) + c_p \cdot \left(\log N\right)^2
\end{equation}
with a fixed (i.e. independent of $m$) absolute positive constant $c_p$ (depending only on $p$), and where $\lambda (J)$ denotes the length of the interval $J$. This proves Theorem \ref{thm:2}. \\
We can proceed in principle quite analogous as in the proof \cite[Theorem 1]{Lar}, by applying the generalizations of the needed auxiliary results, as e.g. Lemma \ref{lem:1b}, \ref{lem_c} or \ref{lem:2gen} etc., and elaborating some necessary technical adaptions.\\

For given $m$ (large enough) let $w_l := p^{m-3}-1-p^3l$ for $l =0,1,\ldots, p^{m-7} - 1 =: M.$\\
For $m \geq 7$ all $w_l$ and $M$ are integers. The $w_l$ are congruent $(p-1)$ modulo $p$, and we have \\

$$ 
(p-1)p^{m-4}<w_M<w_{M-1}<\ldots<w_0<p^{m-3}. 
$$\\

Let $N := n_m + p^m \cdot \left( p^{w_0}+ \ldots +p^{w_{M-1}} + p^{w_{M}}\right)$.\\

Note also that  $N < n_{m+1} < p^{m+1} \cdot p^{p^m}$. Hence  $p^m \ge  \frac{\log N}{p \cdot \log p}$ for $m$ large enough. \\

We will consider the sequence of elements $x_n := \left\{p^n \alpha\right\}$ for $n=0,1,\ldots, N-1$, i.e., the points $x_0, \ldots, x_{n_{m}-1}$ and the points
$x_{n,k} := \left\{p^{(n_m + p^m \cdot n + k)} \cdot \alpha \right\}$
for $n=0,1,\ldots, p^{w_M} + p^{w_{M-1}} + \ldots + p^{w_0}-1$ and $k=0,1,\ldots, p^m-1$. We divide the latter set of $n$'s in blocks $\mathcal{B}_l$ of the form $n = B_l, B_l +1, \ldots, B_l +p^{w_l} -1$ for $l = 0, \ldots, M$, where $B_0 := 0$ and $B_l := p^{w_0} + \ldots + p^{w_{l-1}}$.\\

We construct in the following an interval $J \subseteq [0,1)$ that contains ``too many'' of the points $x_0, \ldots, x_{N-1}$, as quanitified in \eqref{eq:tomany}. $J$ will be of the form $J_M \cup J_{M-1} \cup \ldots \cup J_0$ with $J_l := \left[\frac{U(l)}{p^{w_l}}, \frac{V(l)}{p^{w_l}}\right)$, where $0 \leq U(l) < V(l) < p^{w_l}, \quad U(l)\in\NN_0, V(l) \in \frac{1}{p} \mathbb{N}_0$ and with $0< V(l) - U(l) < 2$, and $\frac{V(l)}{p^{w_l}} = \frac{U(l-1)}{p^{w_{l-1}}}$ for $l = 1,\ldots, M$. That means, $J$ is of the form as sketched in Figure \ref{fig:3}.\\

\begin{figure}
\includegraphics[angle=0,width=120mm]{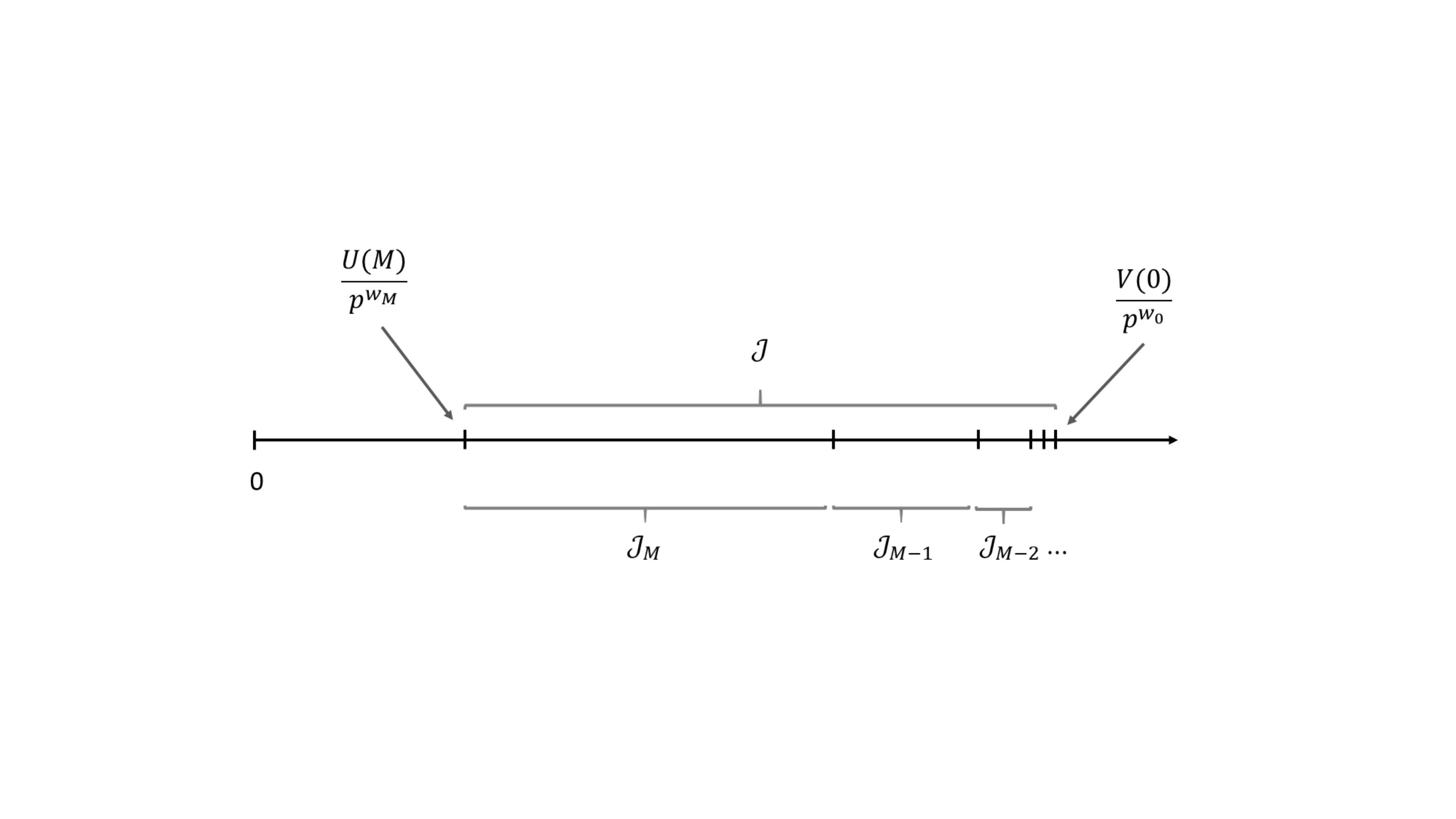} 
~\\ 
\caption{The interval $J$.}\label{fig:3}
\end{figure}
For the length $\lambda(J)$ of the interval $J$ we have
$$
\lambda(J) \leq 2 \cdot \sum^M_{l=0} \frac{1}{p^{w_l}} \leq 4 \cdot \frac{1}{p^{w_M}} \leq \frac{4}{p^{p^{m-4}}},
$$
where we used $w_M>p^{m-4}$. \\

Now let us utilize Lemma \ref{lem:2gen} with $b := p$: 

For every $l = 0,\ldots, M$ we consider the points $x_{n,k}$ with $0 \leq k < p^m$ and $n \in \mathcal{B}_l$. By Lemma \ref{lem:2gen} there are at most $2 p^m$ integers $c_l^{(i)}$ such that the interval $\left[\frac{c_l^{(i)}}{p^{w_l}}, \frac{c^{(i)}_{l} +1}{p^{w_l}}\right)$ does not contain exactly $p^m$ points of these $x_{n,k}$.\\
Altogether there are at most $2 p^m \cdot (M+1) \leq  p^{2m-6}$ intervals of the form $\left[\frac{c_l^{(i)}}{p^{w_l}}, \frac{c^{(i)}_{l} +1}{p^{w_l}}\right)$ which do not contain exactly $p^m$ of the $x_{n,k}$ with $0 \leq k < p^m$ and $n \in \mathcal{B}_l$ for some $l = 0,\ldots, M$. So there are at most $p^{2m-6}$ such ``exceptional intervals'', and the length of the union of these intervals is at most $p^{2m-6} \cdot \sum^M_{l=0} \frac{1}{p^{w_l}} < \frac{p^{2m-5}}{p^{w_M}}< \frac{p^{2m-5}}{p^{p^{m-4}}}< \frac{1}{4}$ for $m$ large enough.\\
Hence there exists a sub-interval $Z$ of $[0,1)$ with length at least $\frac{3}{4}\frac{1}{p^{2 m-6}}$ which has empty intersection with every of the exceptional intervals. 

Let $U(M)$ be the least integer such that $U(M)$ is not congruent to $p-1$ modulo $p$ and such that $\frac{U(M)}{p^{w_M}} \in Z$. For $m$ large enough such $U(M)$ certainly exist. This value $\frac{U(M)}{p^{w_M}}$ is the left border of $J_M$ and hence of $J$. Since $\lambda(J) + \frac{p}{p^{w_M}} \leq \frac{4+p}{p^{w_M}} \leq \frac{4+p}{p^{p^{m-4}}}$ and $\lambda (Z) \geq \frac{3}{4} \cdot \frac{1}{p^{2 m-6}}$, for $m$ large enough we have $J \subseteq Z$ and hence $J$ has empty intersection with every of the exceptional intervals.\\

We start by constructing $J_M$: For this reason we consider the points $x_{n,k}$ for $n \in \mathcal{B}_M$ and $0 \leq k < p^m$. Let $\tilde{J}_M := \left[\frac{U(M)}{p^{w_M}}, \frac{U(M) +1}{p^{w_M}}\right)$. \\

We will show now that for each $k\geq 0$ with $0 \leq k+w_M <  p^m$ there is exactly one $n \in \mathcal{B}_M$ such that $x_{n,k} \in \tilde{J}_M$, and in a second step we will analyze in which sub-interval
$$
J_{M, \gamma} := \left[\frac{U(M)}{p^{w_M}} + \frac{\gamma}{p^{w_M +1}}, \frac{U(M)}{p^{w_M}} + \frac{\gamma+1}{p^{w_M +1}}\right)
$$
for $\gamma = 0,1, \ldots, p-1$ this $x_{n,k}$ is located. \\

Now let $k\geq 0$ with $ k+w_M <  p^m$. We write $U(M) := u_{w_M-1} + u_{w_M-2} \cdot p + \cdots + u_0 \cdot p^{w_M-1}$, 
\begin{eqnarray*}
n&=&e_0(n) + p \cdot e_1(n) + \cdots + p^{w_M-1} \cdot e_{w_M-1}(n)+B_{M}\\
&=& e_0(n) + p \cdot e_1(n) + \cdots + p^{w_M-1} \cdot e_{w_M-1}(n)+p^{w_{M-1}}+p^{w_{M-2}}+\cdots+p^{w_0}\\
&=&e_0(n) + p \cdot e_1(n) + \cdots + p^{w_M-1} \cdot e_{w_M-1}(n)+p^{w_M+p^3}+p^{w_M+2\cdot p^3}+\cdots+2^{w_M+M\cdot p^3}.
\end{eqnarray*}
Note that by the choice of $U(M)$, we know $u_{w_M-1}\neq p-1$. 

Then this leads to the following two systems, where $\overline{0}:=(0,0,0,\ldots,0,0)^T\in\FF_p^{p^3}$, $\overline{1}:=(1,0,0,\ldots,0,0)^T\in\FF_p^{p^3}$, and the vector consisting of $\overline{0}$s, $\overline{1}$s, and $0$s, contains $M$ consecutive $\overline{1}$s:
$$
A^{(\boldsymbol{0},\boldsymbol{1})}_{k,w_M} \cdot \left(\begin{tabular}{c} $e_0(n)$ \\ $\vdots$ \\ $e_{w_M-1} (n)$ \end{tabular}\right) + B^{(\boldsymbol{0},\boldsymbol{1})}_{k,w_M}\left(\begin{array}{c}\overline{0}\\\overline{1}\\\overline{1}\\\vdots\\\overline{1}\\\overline{1}\\0\\\vdots\\0\end{array}\right) \equiv \left(\begin{tabular}{c} $u_0$ \\ $\vdots$ \\ $u_{w_M -1}$ \end{tabular}\right)\pmod{p}
$$
and
$$
c^{(\boldsymbol{0},\boldsymbol{1})}_{k+w_M,w_M} \cdot \left(\begin{tabular}{c} $e_0 (n)$ \\ $\vdots$ \\ $e_{w_M-1}(n)$ \end{tabular}\right) + d^{(\boldsymbol{0},\boldsymbol{1})}_{k+w_M,w_M}\left(\begin{array}{c}\overline{0}\\\overline{1}\\\overline{1}\\\vdots\\\overline{1}\\\overline{1}\\0\\\vdots\\0\end{array}\right) \equiv \gamma \pmod{p}.
$$
Here $A^{(\boldsymbol{0},\boldsymbol{1})}_{k,w_M}$, $C^{(\boldsymbol{0},\boldsymbol{1})}_{k,w_M}$, $c^{(\boldsymbol{0},\boldsymbol{1})}_{k+w_M,w_M}$, and $d^{(\boldsymbol{0},\boldsymbol{1})}_{k+w_M,w_M}$ are the magnitudes defined in Section ~\ref{sec:4}. Note that here we used the fact that 
$k + w_M<p^m$. Otherwise the system would contain conditions described by using $e_j (n+1)$. 

Since by Item (2) of Lemma \ref{lem:rank} the matrix $A^{(\boldsymbol{0},\boldsymbol{1})}_{k,w_M}$ is regular and we know that the first system for every $k$ has a unique solution, i.e.,
$$
\left(\begin{tabular}{c} $e_0 (n)$ \\ $\vdots$ \\ $e_{w_M-1}$ \end{tabular}\right) \equiv (A^{(\boldsymbol{0},\boldsymbol{1})}_{k,w_M})^{-1} \cdot \left(\begin{tabular}{c} $u_0$ \\ $\vdots$ \\  $u_{w_M-1}$ \end{tabular}\right)-(A^{(\boldsymbol{0},\boldsymbol{1})}_{k,w_M})^{-1}B^{(\boldsymbol{0},\boldsymbol{1})}_{k,w_M}\left(\begin{array}{c}\overline{0}\\\overline{1}\\\overline{1}\\\vdots\\\overline{1}\\\overline{1}\\0\\\vdots\\0\end{array}\right)\pmod{p},
$$
 as announced already in the beginning of the construction of $J_M$. Inserting this unique solution into the second system yields
\begin{align*}
\gamma \equiv &\,\,\, c^{(\boldsymbol{0},\boldsymbol{1})}_{k+w_M,w_M} \cdot (A^{(\boldsymbol{0},\boldsymbol{1})}_{k,w_M})^{-1} \cdot \left(\begin{tabular}{c} $u_0$ \\ $\vdots$ \\ $u_{w_M-1}$ \end{tabular}\right) -c^{(\boldsymbol{0},\boldsymbol{1})}_{k+w_M,w_M} \cdot (A^{(\boldsymbol{0},\boldsymbol{1})}_{k,w_M})^{-1}B^{(\boldsymbol{0},\boldsymbol{1})}_{k,w_M}\left(\begin{array}{c}\overline{0}\\\overline{1}\\\overline{1}\\\vdots\\\overline{1}\\\overline{1}\\0\\\vdots\\0\end{array}\right)\\
& +  d^{(\boldsymbol{0},\boldsymbol{1})}_{k+w_M,w_M}\left(\begin{array}{c}\overline{0}\\\overline{1}\\\overline{1}\\\vdots\\\overline{1}\\\overline{1}\\0\\\vdots\\0\end{array}\right)\pmod{p}. 
\end{align*}
Lemma~\ref{lem_c}, Item (1) and Item (2), allow us to rewrite the above equation as 
\begin{equation}\label{equ:gamma_M}
\gamma \equiv \xi_{w_M}\cdot \left(\begin{tabular}{c} $u_0$ \\ $\vdots$ \\ $u_{w_M-1}$ \end{tabular}\right)+ c^{(\boldsymbol{0}^{(w_M)},\boldsymbol{1}^{(w_M)})}_{k+w_M,p^m-w_M} \left(\begin{array}{c}\overline{0}\\\overline{1}\\\overline{1}\\\vdots\\\overline{1}\\\overline{1}\\0\\\vdots\\0\end{array}\right)\pmod{p}. 
\end{equation}

Let $\mathcal{A}_{\delta}(M)$ be the number of $k$s in $\{0,\ldots, p^m-1-w_M\}$ such that 
$\gamma=\delta$ with fixed $\delta\in\{0,1,\ldots,p-1\}$. 
We define 
$$\mathcal{A}(M):=\max(\mathcal{A}_0(M),\mathcal{A}_1(M),\ldots,\mathcal{A}_{p-1}(M)),$$
which we will estimate later.

Altogether we know, that there is a $\gamma\in\{0,1,\ldots,p-1\}$ such that $x_{n,k}\in J_{M,\gamma}$ for at least $\mathcal{A}(M)=:q(M)p^m$ values of $k$s.\\

We distinguish between the cases $\gamma\neq p-1$ and $\gamma=p-1$: 

\textbf{If} $\boldsymbol{\gamma \neq p-1}$, then we choose $J_M := \left[\frac{U(M)}{p^{w_M}} , \frac{U(M) + \frac{p-1}{p}}{p^{w_M}}\right)$. $J_M$ then contains at least $\mathcal{A}(M)=q(M)p^m$ of the points $x_{n,k}$ with $n \in \mathcal{B}_M$. \\

\textbf{If} $\boldsymbol{\gamma = p-1}$, then we choose $J_M := \left[\frac{U(M)}{p^{w_M}} , \frac{U(M) + \frac{2p-1}{p}}{p^{w_M}}\right)$. The reason for this choice is the following: Since $\tilde{J}_M \subseteq Z$ the interval $\tilde{J}_M=\left[\frac{U(M)}{p^{w_M}} , \frac{U(M) + 1}{p^{w_M}}\right)$ is not an ``exceptional interval'' and contains exactly $p^m$ points $x_{n,k}$ with $0 \leq k < p^m$ and $n \in \mathcal{B}_M$.\\

The question, how many points of $x_{n,k}$ with $ k+w_M <   p^m$  lie in 
$\left[\frac{U(M) + 1}{p^{w_M}}, \frac{U(M) + 2}{p^{w_M}}\right)$, which again is not an ``exceptional interval'' and, more detailed, in which of the sub-intervals $\left[\frac{U(M) +1}{p^{w_M}} + \frac{\tilde{\gamma}}{p^{w_M+1}}, \frac{U(M) +1}{p^{w_M}} + \frac{\tilde{\gamma}+1}{p^{w_M+1}}\right)$ with $\tilde{\gamma}\in\{0,1,\ldots,p-1\}$ these points are located, now leads to the systems
$$
A^{(\boldsymbol{0},\boldsymbol{1})}_{k,w_M} \cdot \left(\begin{tabular}{c} $e_0(n)$ \\ $\vdots$ \\ $e_{w_M-1} (n)$ \end{tabular}\right) + B^{(\boldsymbol{0},\boldsymbol{1})}_{k,w_M}\left(\begin{array}{c}\overline{0}\\\overline{1}\\\vdots\\\overline{1}\\0\\\vdots\\0\end{array}\right) \equiv \left(\begin{tabular}{c} $u_0$ \\ $\vdots$ \\ $u_{w_M -1}+1$ \end{tabular}\right)\pmod{p}
$$
and
$$
c^{(\boldsymbol{0},\boldsymbol{1})}_{k+w_M,w_M} \cdot \left(\begin{tabular}{c} $e_0 (n)$ \\ $\vdots$ \\ $e_{w_M-1}(n)$ \end{tabular}\right) + d^{(\boldsymbol{0},\boldsymbol{1})}_{k+w_M,w_M}\left(\begin{array}{c}\overline{0}\\\overline{1}\\\vdots\\\overline{1}\\0\\\vdots\\0\end{array}\right)  \equiv \tilde{\gamma}\pmod{p}.
$$
Hence, as before
\begin{align*}
\tilde{\gamma} \equiv & {c^{(\boldsymbol{0},\boldsymbol{1})}_{k+w_M,w_M} \cdot (A^{(\boldsymbol{0},\boldsymbol{1})}_{k,w_M})^{-1}} \cdot \left(\begin{tabular}{c} $u_0$ \\ $\vdots$ \\ $u_{w_M-1}+1$ \end{tabular}\right) -c^{(\boldsymbol{0},\boldsymbol{1})}_{k+w_M,w_M} \cdot (A^{(\boldsymbol{0},\boldsymbol{1})}_{k,w_M})^{-1}B^{(\boldsymbol{0},\boldsymbol{1})}_{k,w_M}\left(\begin{array}{c}\overline{0}\\\overline{1}\\\vdots\\\overline{1}\\0\\\vdots\\0\end{array}\right)\\
& +  d_{k+w_M,w_M}\left(\begin{array}{c}\overline{0}\\\overline{1}\\\vdots\\\overline{1}\\0\\\vdots\\0\end{array}\right)\pmod{p}.
\end{align*}
or equivalently using again Lemma \ref{lem_c}

\begin{equation}\label{equ:tildegamma_M}
\tilde{\gamma} \equiv \xi_{w_M}\cdot \left(\begin{tabular}{c} $u_0$ \\ $\vdots$ \\ $u_{w_M-1}+1$ \end{tabular}\right)+ c^{(\boldsymbol{0}^{(w_M)},\boldsymbol{1}^{(w_M)})}_{k+w_M,p^m-w_M} \left(\begin{array}{c}\overline{0}\\\overline{1}\\\vdots\\\overline{1}\\0\\\vdots\\0\end{array}\right)\pmod{p}. 
\end{equation}

Note that we have chosen $U(M)$ to be an integer $\not\equiv p-1 \pmod{p}$, and so $u_{w_M-1}\neq p-1$. Moreover, by the definition of $\xi_t$ the last entry in $\xi_{w_M}$ equals $(-1)^{2w_M}\binom{w_M}{w_M-1}=w_M\equiv p-1\pmod{p}$ as we have chosen $w_l\equiv p-1\pmod{p}$ for all $l\in\{0,\ldots,M\}$. Therefore $\tilde{\gamma} \equiv \gamma+\xi_{w_M}\equiv\gamma +p- 1 \equiv p-1-1\equiv p-2 \pmod{p}$. Hence $\left[\frac{U(M) + 1}{p^{w_M}}, \frac{U(M) + 1}{p^{w_M}} + \frac{p-1}{p} \cdot \frac{1}{p^{w_M}}\right)$ contains at least $\mathcal{A}(M)=q(M) \cdot p^m$ points of the $x_{n,k}$ with $k+w_M < p^m$ and $n \in \mathcal{B}_M$.\\

We summarize: For both choices of $J_M$ we have
$$
\# \left\{(n,k)\left|n \in \mathcal{B}_M, 0 \leq k < p^m, x_{n,k} \in J_M\right\}\right. \geq p^m \cdot p^{w_M} \cdot \lambda \left(J_M\right) + \left(q(M)-\frac{p-1}{p}\right) \cdot p^m.
$$
Note that $V(M)<p^{w_M}$ is also guaranteed in both cases. \\

In the next step we will show how to choose the interval $J_{M-1}$. Then it will be clear how we will, quite analogously, choose the intervals $J_{M-2}, \ldots, J_0$.\\

We recall that the interval $J_{M-1}$ is denoted as $J_{M-1} = \left[\frac{U(M-1)}{p^{w_{M-1}}}, \frac{V(M-1)}{p^{w_{M-1}}}\right)$, where $\frac{U(M-1)}{p^{w_{M-1}}} = \frac{V(M)}{p^{w_M}}$. Note that ${w_{M-1}}={w_M}+p^3$. Thus $U(M-1)$ is a multiple of $p$ and therefore $\not\equiv p-1\pmod{p}$. We write again $U(M-1)$ in its unique base $p$ representation, $U(M-1) = u_{w_{M-1}-1} + u_{w_{M-1}-2} \cdot p + \ldots + u_0 \cdot p^{w_{M-1}-1}$. Similarly to the choice of $J_M$ we will choose $J_{M-1}$ either as
$$
J_{M-1} := J_{M-1}^{(1)} := \left[\frac{U(M-1)}{p^{w_{M-1}}}, \frac{U(M-1)}{p^{w_{M-1}}} + \frac{p-1}{p} \cdot \frac{1}{p^{w_{M-1}}}\right)
$$
or
$$
J_{M-1} := J_{M-1}^{(2)} := \left[\frac{U(M-1)}{p^{w_{M-1}}}, \frac{U(M-1)}{p^{w_{M-1}}} + \frac{2p-1}{p} \cdot \frac{1}{p^{w_{M-1}}}\right).
$$
To decide, which of the two choices we prefer, we first consider the interval 
$$
\tilde{J}_{M-1} := \left[\frac{U(M-1)}{p^{w_{M-1}}}, \frac{U(M-1)}{p^{w_{M-1}}} + \frac{1}{p^{w_{M-1}}}\right)
$$
and the points $x_{n,k}$ with $n \in \mathcal{B}_{M-1}$ and $k = 0,1,\ldots, p^m -1$. \\
$\tilde{J}_{M-1}$ by Lemma~\ref{lem:2gen} and Proposition \ref{prop:1} contains exactly $p^m$ of these points. Again for $k\geq 0$ such that $ k +w_{M-1}<  p^m$ we ask where exactly these points are located in $\tilde{J}_{M-1}$. Especially, again we ask how these points are distributed to the sub-intervals
$$
\tilde{J}_{M-1, \gamma} := \left[\frac{U(M-1)}{p^{w_{M-1}}} + \frac{\gamma}{p^{w_{M-1}+1}}, \frac{U(M-1)}{p^{w_{M-1}}} + \frac{\gamma+1}{p^{w_{M-1}+1}}\right)
$$
for $\gamma = 0, 1,\ldots,p-1$. Then for $n = e_0(n) + p \cdot e_1(n) + \ldots + p^{w_{M-1}-1} \cdot e_{w_{M-1}-1}(n)+B_{M-1}$ we arrive at the systems, where the vector consisting of $\overline{0}$,s, $\overline{1}$s, and $0$'s, now contains $(M-1)$ consecutive $\overline{1}$s:

$$
A^{(\boldsymbol{0},\boldsymbol{1})}_{k,w_{M-1}} \cdot \left(\begin{tabular}{c} $e_0(n)$ \\ $\vdots$ \\ $e_{w_{M-1}-1} (n)$ \end{tabular}\right) + B^{(\boldsymbol{0},\boldsymbol{1})}_{k,w_{M-1}}\left(\begin{array}{c}\overline{0}\\\overline{1}\\\vdots\\\overline{1}\\0\\\vdots\\0\end{array}\right) \equiv \left(\begin{tabular}{c} $u_0$ \\ $\vdots$ \\ $u_{w_{M-1} -1}$ \end{tabular}\right)\pmod{p}
$$
and
$$
c^{(\boldsymbol{0},\boldsymbol{1})}_{k+w_{M-1},w_{M-1}} \cdot \left(\begin{tabular}{c} $e_0 (n)$ \\ $\vdots$ \\ $e_{w_{M-1}-1}(n)$ \end{tabular}\right) + d^{(\boldsymbol{0},\boldsymbol{1})}_{k+w_{M-1},w_{M-1}}\left(\begin{array}{c}\overline{0}\\\overline{1}\\\vdots\\\overline{1}\\0\\\vdots\\0\end{array}\right) \equiv \gamma\pmod{p}.
$$
As in the construction of $J_M$ we solve the first system and insert to the second, to obtain after using Lemma \ref{lem_c}: 

\begin{equation*}
\gamma \equiv \xi_{w_{M-1}}\cdot \left(\begin{tabular}{c} $u_0$ \\ $\vdots$ \\ $u_{w_{M-1}-1}$ \end{tabular}\right)+ c^{(\boldsymbol{0}^{(w_{M-1})},\boldsymbol{1}^{(w_{M-1})})}_{k+w_{M-1},p^m-w_{M-1}} \left(\begin{array}{c}\overline{0}\\\overline{1}\\\vdots\\\overline{1}\\0\\\vdots\\0\end{array}\right)\pmod{p}. 
\end{equation*}

Then again distinguishing between $\gamma\neq p-1$ and $\gamma=p-1$ we obtain in both cases $J_{M-1}$, such that 
\begin{align*}
\# \left\{(n,k)\left|n \in \mathcal{B}_{M-1}, 0 \leq k < p^m ,x_{n,k} \in J_{M-1}\right\} \right. \geq& \,\,\, p^m \cdot p^{w_{M-1}} \cdot \lambda \left(J_{M-1}\right) +\\
&+ \left(q(M-1)-\frac{p-1}{p}\right) \cdot p^m.
\end{align*}
Now important is the following: 
Further, since $J_M \cup J_{M-1} \subseteq Z$, we know that the interval 
$$
J_M = \left[\frac{U(M)}{p^{w_M}}, \frac{V(M)}{p^{w_M}}\right) = \left[\frac{U(M) \cdot p^{p^3}}{p^{w_{M-1}}}, \frac{V(M) \cdot p^{p^3}}{p^{w_{M-1}}}\right)
$$
satisfies that $U(M) \cdot p^{p^3}$ and $V(M) \cdot p^{p^3}$ are integers and therefore $J_M$ contains at least $p^m \cdot p^{w_{M-1}} \cdot \lambda \left(J_M\right)$ of the points $x_{n,k}$ with $k=0, \ldots, p^m-1$ and $n \in \mathcal{B}_{M-1}$. Together
\begin{align*}
\# \left\{(n,k) \left|n \in \mathcal{B}_{M-1}, 0 \leq k < p^m,x_{n,k} \in J_M \cup J_{M-1}\right\}\right. \geq&\,\,\, p^m  p^{w_{M-1}}  \lambda \left(J_M \cup J_{M-1}\right)+ \\
&+ \left(q(M-1)-\frac{p-1}{p}\right) \cdot p^m.
\end{align*}
In exactly this way we proceed to construct $J_{M-1}, \ldots, J_0$ such that finally for every $l=0, \ldots, M$ we have: 
\begin{align}\label{equ:number:B_l}
\# \left\{(n,k)\left| n \in \mathcal{B}_l, 0 \leq k < p^m, x_{n,k} \in J_M \cup \ldots \cup J_l\right.\right\} \geq& \quad \quad\end{align}
\begin{align*}
\quad \quad&p^m \cdot p^{w_l} \cdot \lambda \left(J_M \cup \ldots \cup J_l\right)  + \left(q(l)-\frac{p-1}{p}\right) \cdot p^m.
\end{align*}
We set $J := J_M \cup \ldots \cup J_0$. We estimate $\mathcal{N}:=\# \left\{0 \leq n < N \left|x_n \in J\right.\right\}$ from below: 
\begin{eqnarray*}
 \mathcal{N} &=& \# \left\{0 \leq n < n_m\left|x_n \in J\right.\right\} \\
&&+ \sum_{l=0}^{M}\# \left\{(n,k)\left|n \in \mathcal{B}_l, 0 \leq k < p^m ,x_{n,k} \in J_M \cup \ldots \cup J_l\right.\right\}\\
&&+ \sum_{l=0}^{M}\# \left\{(n,k)\left|n \in \mathcal{B}_l, 0 \leq k < p^m, x_{n,k} \in J_{l-1} \cup \ldots \cup J_0\right.\right\}\\
&\geq &\# \left\{0 \leq n < n_m\left|x_n \in J\right.\right\} \\
&&+ \sum_{l=0}^{M}\# \left\{(n,k) \left|n \in \mathcal{B}_l, 0 \leq k < p^m, x_{n,k} \in J_M \cup \ldots \cup J_l\right.\right\}\\
&\geq &\# \left\{0 \leq n < n_m\left|x_n \in J\right.\right\}\\
&&+ \sum^M_{l=0} p^m \cdot p^{w_l} \cdot \lambda \left(J_M \cup \ldots \cup J_l\right) + \sum^M_{l=0}  \left(q(l)-\frac{p-1}{p}\right) \cdot p^m\\
&=& \# \left\{0 \leq n < n_m\left|x_n \in J\right.\right\}+(N-n_m)\lambda(J)\\
&&-\sum^M_{l=0} p^m \cdot p^{w_l} \cdot \lambda \left(J_{l-1} \cup \ldots \cup J_0\right) + \sum^M_{l=0}  \left(q(l)-\frac{p-1}{p}\right) \cdot p^m,
\end{eqnarray*}
where in the penultimate step we used \eqref{equ:number:B_l}. 

We derive an upper bound for $\lambda \left(J_{l-1} \cup \ldots \cup J_0\right)$:
\begin{equation}\label{eq:J_letc}
\lambda \left(J_{l-1} \cup \ldots \cup J_0\right)\leq \frac{2p-1}{p}\left(\frac{1}{p^{w_{l-1}}}+\cdots+\frac{1}{p^{w_0}}\right)\leq \frac{2p-1}{p}\frac{1}{p^{w_{l-1}}}\frac{1}{1-\frac1p}=\frac{2p-1}{(p-1)p^{p^3}}\frac{1}{p^{w_{l}}}.
\end{equation}
Furthermore, we know from Example \ref{examp:1}, Item (1) that 
\begin{equation}\label{disc:n_m}
\# \left\{(n,k)\left|0 \leq n < n_m, x_n \in J\right.\right\}-n_m\lambda(J)\geq -\delta_p \log n_m\geq -\delta_p \log N,
\end{equation}
with a fixed positive constant $\delta_p$ depending only on $p$.

Finally, for an appropriate lower bound of 
$$\# \left\{0 \leq n < N\left|x_n \in J\right.\right\}-N\lambda(J)$$
we face the task of estimating $\sum^M_{l=0}  q(l)\cdot p^m =\sum^M_{l=0}\mathcal{A}(l)$ from below: 
We know that for each $l$ in $\{0,\ldots,M\}$, for at least $\mathcal{A}(l)$ values of $k$s with $k+w_l<p^m$, the term 
$$\xi_{w_{l}}\cdot \left(\begin{tabular}{c} $u_0$ \\ $\vdots$ \\ $u_{w_{l}-1}$ \end{tabular}\right)+ c^{(\boldsymbol{0}^{(w_{l})},\boldsymbol{1}^{(w_{l})})}_{k+w_{l},p^m-w_{l}} \left(\begin{array}{c}\overline{0}\\\overline{1}\\\vdots\\\overline{1}\\0\\\vdots\\0\end{array}\right)\quad\mbox{ or }\quad\xi_{w_{l}}\cdot \left(\begin{tabular}{c} $u_0$ \\ $\vdots$ \\ $u_{w_{l}-1}+1$ \end{tabular}\right)+ c^{(\boldsymbol{0}^{(w_{l})},\boldsymbol{1}^{(w_{l})})}_{k+w_{l},p^m-w_{l}} \left(\begin{array}{c}\overline{0}\\\overline{1}\\\vdots\\\overline{1}\\0\\\vdots\\0\end{array}\right)$$
resp. takes the same value. 

Since $\xi_{w_l}$ and $u_0,\ldots,u_{w_l-1}$ are independent of $k$, it remains to focus on 
\begin{align*}
c^{(\boldsymbol{0}^{(w_{l})},\boldsymbol{1}^{(w_{l})})}_{k+w_{l},p^m-w_{l}} \left(\begin{array}{c}\overline{0}\\\overline{1}\\\vdots\\\overline{1}\\0\\\vdots\\0\end{array}\right)&=\left(\binom{k+w_l+j}{j}\right)_{j=0,1,\ldots,p^m-w_l-1}\left(\begin{array}{c}\overline{0}\\\overline{1}\\\vdots\\\overline{1}\\0\\\vdots\\0\end{array}\right)\\
&=\sum_{j=1}^l\binom{\{(k+w_l)/p^3\}+ p^3 \lfloor(k+w_l)/p^3\rfloor p^3+jp^3}{jp^3}\\
&=\sum_{j=1}^l\binom{\lfloor(k+w_l)/p^3\rfloor+j}{j}\\
&=\binom{\lfloor(k+w_l)/p^3\rfloor+1+l}{l}-1
\end{align*}
in $\FF_p$, where we used the special setting $\boldsymbol{\eta}=\boldsymbol{0}$, $\boldsymbol{u}=\boldsymbol{1}$, $\boldsymbol{z}=\boldsymbol{0}$, Lucas Theorem and in the last step Lemma \ref{lem:1a}. 

Note that $M+1 = p^{m-7}$. For each $l\in\{0,1,\ldots,M\}$ take now those $k\in\{0,1,\ldots,p^m-1\}$ such that $\lfloor\frac{k+w_l}{p^3}\rfloor\in\{p^{m-3}-p(p^3-1)p^{m-7},\ldots,p^{m-3}-1\}$. Further note, that indeed every value between $p^{m-3}-p(p^3-1)p^{m-7}$ and $p^{m-3}-1$ is attained by $\lfloor\frac{k+w_l}{p^3}\rfloor$ for exactly $p^3$ values of $k$ between $0$ and $p^m-1$. This follows from $\lfloor\frac{p^3+w_0}{p^3}\rfloor\leq p^{m-3}-p(p^3-1)p^{m-7}$ and $\lfloor\frac{p^m-p^3+w_M}{p^3}\rfloor\geq p^{m-3}-1$. Hence, 
$$\mathcal{A}(l)\geq p^3\cdot\Big(\mbox{the number of $(p-1)$s in the $l$th column of $D_m$}\Big).$$
Note that $D_m$ is a $(p(p^3-1)p^{m-7}\times p^{m-7})$ matrix and from Lemma \ref{lem:1b} we know that $D_m$ contains $p(p^3-1)p^{m-7}\cdot p^{m-7}\left(1-\left(\frac{p+1}{2p}\right)^{m-7}\right)$ many $(p-1)$s. We choose now $m>7$ large enough such that $\left(1-\left(\frac{p+1}{2p}\right)^{m-7}\right)\geq \frac{p^5-1}{p^5}$. 

We summarize: 
\begin{align*}
\sum^M_{l=0}\mathcal{A}(l)&\geq p^3 \cdot \Big(\mbox{the number of $(p-1)$s in $D_m$}\Big)\\
&= p^3\left(1-\left(\frac{p+1}{2p}\right)^{m-7}\right) p(p^3-1)p^{m-7}\cdot p^{m-7}\\
&\geq (M+1)p^m \frac{p^3-1}{p^3}\cdot \frac{p^5-1}{p^5}.
\end{align*}

Finally, this estimate together with \eqref{eq:J_letc} and \eqref{disc:n_m} implies for $m$ large enough:  
\begin{eqnarray*}
 ND_N^* &\geq& \#  \left\{0 \leq n < N \left|x_n \in J\right.\right\}-N\lambda(J) \\
&\geq&- \delta_p \cdot \log N -p^m(M+1)\frac{2p-1}{(p-1)p^{p^3}}-p^m(M+1)\frac{p-1}{p}\\
&&  +  \sum_{i=0}^{M} \mathcal{A}(l)  \\
& \geq &  - \delta_p \cdot \log N + p^m(M+1) \cdot \left(\frac{p^3-1}{p^3}\cdot \frac{p^5-1}{p^5}-\frac{p-1}{p}-\frac{2p-1}{(p-1)p^{p^3}}\right)\\
& \geq& - \delta_p \cdot \log N + p^{2m}p^{-7} \cdot c'  \\
& \geq & c \cdot (\log N)^2.
\end{eqnarray*}
Here $c,c'$ are positive constant and we used $p^{2m} \geq \frac{\left(\log N\right)^2}{(p\log p)^2}$ and the fact that $$\left(\frac{p^3-1}{p^3}\cdot \frac{p^5-1}{p^5}-\frac{p-1}{p}-\frac{2p-1}{(p-1)p^{p^3}}\right)>0$$ for every $p\geq 2$ which can be ensured by applying basic analysis. %\\
%We see that the $\boldsymbol{z}\neq \boldsymbol{0}$ causes a further dependence on $k$ of those terms and simplifying these terms similar to by applying Lucas Theorem together with Lemma \ref{lem:1a} is not clear how this could be managed.
\hfill$\qed$

\begin{remark}\label{rem:3}{\rm

The method of proof of Theorem \ref{thm:2} heavily uses  $\boldsymbol{\eta}=\boldsymbol{0}$, $\boldsymbol{u}=\boldsymbol{1}$, $\boldsymbol{z}=\boldsymbol{0}$, when estimating $\sum^M_{l=0}\mathcal{A}(l)$ from below. For the general setting we would have to estimate for each $l$ in $\{0,\ldots,M\}$, the number of $k$s with $k+w_l<p^m$, such that the term
$$\xi_{w_{l}}\cdot \left(\begin{tabular}{c} $u_0-z_k$ \\ $\vdots$ \\ $u_{w_{l}-1}-z_{k+w_l-1}$ \end{tabular}\right)+ c^{(\boldsymbol{\eta}^{(w_{l})},\boldsymbol{u}^{(w_{l})})}_{k+w_{l},p^m-w_{l}} \left(\begin{array}{c}\overline{0}\\\overline{1}\\\vdots\\\overline{1}\\0\\\vdots\\0\end{array}\right)+z_{k+w_l}$$
or
$$\quad\xi_{w_{l}}\cdot \left(\begin{tabular}{c} $u_0+z_k$ \\ $\vdots$ \\ $u_{w_{l}-1}+z_{k+w_l-1}+1$ \end{tabular}\right)+ c^{(\boldsymbol{\eta}^{(w_{l})},\boldsymbol{u}^{(w_{l})})}_{k+w_{l},p^m-w_{l}} \left(\begin{array}{c}\overline{0}\\\overline{1}\\\vdots\\\overline{1}\\0\\\vdots\\0\end{array}\right)+z_{k+w_l}$$
resp. takes the same value. 
Or equivalently, using Remark \ref{rem:affine}, such that 

$$\xi_{w_{l}}\cdot \left(\begin{tabular}{c} $u_0$ \\ $\vdots$ \\ $u_{w_{l}-1}$ \end{tabular}\right)+ c^{(\boldsymbol{\eta}^{(w_{l})},\boldsymbol{u}^{(w_{l})})}_{k+w_{l},p^m-w_{l}} \Big(\left(\begin{array}{c}\overline{0}\\\overline{1}\\\vdots\\\overline{1}\\0\\\vdots\\0\end{array}\right)+\left(\begin{tabular}{c} $z'_{w_l}$ \\ $\vdots$ \\ $\vdots$ \\ $\vdots$ \\ $\vdots$ \\ $z'_{p^m-1}$ \end{tabular}\right)\Big)$$
or
$$\quad\xi_{w_{l}}\cdot \left(\begin{tabular}{c} $u_0$ \\ $\vdots$ \\ $u_{w_{l}-1}+1$ \end{tabular}\right)+ c^{(\boldsymbol{\eta}^{(w_{l})},\boldsymbol{u}^{(w_{l})})}_{k+w_{l},p^m-w_{l}} \Big(\left(\begin{array}{c}\overline{0}\\\overline{1}\\\vdots\\\overline{1}\\0\\\vdots\\0\end{array}\right) +\left(\begin{tabular}{c} $z'_{w_l}$ \\ $\vdots$ \\ $\vdots$ \\ $\vdots$ \\ $\vdots$ \\ $z'_{p^m-1}$ \end{tabular}\right)\Big)$$
resp. takes the same value.  

The setting $\boldsymbol{z}\neq \boldsymbol{0}$ causes a further dependency on $k$ in both point of views $\boldsymbol{z}$ or $\boldsymbol{z}'$ resp. Then $\boldsymbol{\eta}\neq\boldsymbol{0}$ or $\boldsymbol{u}\neq \boldsymbol{1}$ prevents the simple application of Lemma \ref{lem:1a}. 
Of course there is more ``room'' in the method of proof of Theorem \ref{thm:2}. As, e.g., one degree of freedom in the proof is the choice of $w_0>w_1>\ldots >w_M$ that will cause vectors different to $(\overline{0},\overline{1},\ldots,\overline{1},0,\ldots,0)^T$, and one could try to adapt the proof for different specific setting of $\boldsymbol{\eta},\boldsymbol{u},\boldsymbol{z}$. Nevertheless, a proof for the general case seems to be technically very complicated up to impossible. 

Additionally, note that while in base $2$ the affine necklaces contain all $(2^m,2^m)$-nested perfect necklaces (see \cite{Bec}), in odd prime bases the affine necklaces form just a subset of all $(p^m,p^m)$-nested perfect necklaces. Hence a generalization of Theorem \ref{thm:2} to all normal numbers constructed via $(b^m,b^m)$-nested perfect necklaces, which we conjecture to be true, would need a much more generalized or even different method of proof. 
}
\end{remark}

\end{document}